\newtheorem{theorem}{Theorem}
\newtheorem{claim}{Claim}
\title{Learning Expected Reward for Switched Linear Control Systems: A Non-Asymptotic View}
\author{%
  Muhammad Abdullah Naeem  \\
  Department of Electrical and Computer Engineering\\
  Duke University\\
  Durham, NC 27708 \\
  \texttt{muhammad.abdullah.naeem@duke.edu}\\
\And
  Miroslav Pajic\\
  Department of Electrical and Computer Engineering\\
  Duke University\\
  Durham, NC 27708 \\ 
  miroslav.pajic@duke.edu \\
}
\begin{document}

\maketitle

\begin{abstract}
\vspace{-8pt}
In this work, we show existence of invariant ergodic measure for switched linear dynamical systems (SLDSs) under a norm-stability assumption of system dynamics~in~some unbounded subset of $\mathbb{R}^{n}$. Consequently, given a stationary Markov control policy, we derive non-asymptotic bounds for learning expected reward (w.r.t the invariant ergodic measure our closed-loop system mixes to) from time-averages using Birkhoff's Ergodic Theorem. The presented results provide a foundation for deriving non-asymptotic analysis for average reward-based optimal control of SLDSs. Finally, we illustrate the presented theoretical results in two case-studies.
\end{abstract}

\section{Introduction}
\label{submission}
Last decade has seen tremendous advancements in non-asymptotic analysis of system identification and optimal control for linear time-invariant dynamical systems (e.g.,~\cite{tu2018least,hao2020provably, oymak2019stochastic,fazel2018global,simchowitz2018learning,sarkar2019finite}). 
When evaluating a value function corresponding to a policy for the infinite-horizon case, existing literature in non-asymptotic analysis, such as~\cite{lazaric2012finite,tu2018least}, uses a discount factor on instantaneous rewards;\footnote{In this work, by the reward function we actually refer to the cost function in the control sense.}  
this  results in a policy iteration algorithm that computes a new policy that minimize immediate rewards rather than minimizing the cumulative reward over infinite horizon. 
While this approach might be valid for some application domains (e.g.,  finance), it may not be suitable in general setting; 
in the general case, it is preferable to minimize the expected reward with respect to the stationary distribution induced by the choice of the control policy~\cite{bertsekas1995dynamic}.  

The main challenge in using this approach is that, when system dynamics is not known, we do not have access to the stationary distribution.~However, let us assume that we do have access~to~instantaneous rewards and are able to show that the underlying dynamical system mixes geometrically to an invariant ergodic measure. Then, the expected reward w.r.t the stationary distribution induced by our choice of control policy can be approximated from time-averages of instantaneous rewards, under mild assumptions on the reward function. On the other hand, when system dynamics is unknown, average reward-based optimal control requires bounds on the mixing time; i.e, these methods require a bound on the length of time-averages of the reward that, with high probability, approximate the expected reward w.r.t the stationary distribution induced by the choice of the control policy. \cite{zahavy2019average} recently provided non-asymptotic analysis for this problem when the underlying state-space is discrete. 
In this work, we focus on the case when the controlled dynamical systems is an unknown switched linear dynamical system in continuous state-space~$\mathbb{R}^{n}$. We first show sufficient conditions for existence of an ergodic invariant distribution. After establishing existence, we provide analysis for non-asymptotic sample complexity of learning the expected reward from its time-averages.

\subsection{Preliminaries and Background}
 \vspace{-2pt}
\paragraph{Notation.} $\mathbb{N} $ and $\mathbb{R}$ denote the sets of natural and real numbers, respectively.  ${I}_{n}\in\mathbb{R}^{n\times n}$ denotes the $n$ dimensional identity matrix, whereas $\| \nu -\mu \|_{tv}$ is the total variation distance~between probability measures $\mu$ and $\nu$. 
For random variables $x$ and $y$, $\mathbb{E}(x)$ and $Cov(x,y)$ denote the expectation and covariance. 
$\lambda^{n}()$ is the n-dimensional Lebesgue measure, and $\mathbb{B}_{\mathbb{R}^{n}}$ is Borel $\sigma$-algebra on $\mathbb{R}^{n}$. $ \mathcal{B}_{\alpha}^{n}:=\{x \in \mathbb{R}^{n}: \|x\|_2
\leq \alpha \}$ is the 
$\alpha$-ball in $\mathbb{R}^n$. Also, $y_n \xrightarrow{a.s} y$ denotes that $y_n$ converges almost surely to $y$, whereas to simplify our presentation $a \land b$ denotes $\min(a,b)$. $\chi_{\{\}}()$ is the indicator function,  $Q \succ  0$ ($Q \succeq  0$) denotes that matrix $Q$ is positive (semi)definite, and $\rho(A)$ is the spectral radius of matrix $A \in \mathbb{R}^{n \times n}$. Finally, for a set $\mathcal{K}\subseteq\{1,...,M\}$, its complement is $\mathcal{K}^\complement:=\{1,...,M\}\setminus\mathcal{K}$. 

\paragraph{Background.} We consider a discrete-time switched linear dynamical system (SLDS) of the form 
\vspace{-2pt}
\begin{equation}
\label{eq:sls}
x_{t+1}=\sum_{j=1}^{M} (A_j x_t+B_J u_t +w_{t}^{j})\chi_{\mathcal{M}_j} (x_t).
\end{equation}
Here, $x_t \in \mathbb{R}^n, u_t \in \mathbb{R}^p$ denote the system's state and input, respectively, and $A_j \in \mathbb{R}^{n \times n}$ and $B_j \in \mathbb{R}^{n \times p}$, $j=1,...,M$, capture system dynamics in each of the $M$ regions that decompose the state-space -- the regions, defined as $\mathcal{M}_j = \{x \in \mathbb{R}^{n}: L_j x \leq C_j\}$, $j=1,...,M$, are pairwise disjoint satisfying $\bigcup_{j=1}^{M} \mathcal{M}_j = \mathbb{R}^{n}$.\footnote{To simplify our notation, we employ a polyhedra-based region representation. On the other hand, any pairwise-disjoint region decomposition of the state space would suffice.}
In addition, for a fixed region $j$, noise vectors $w_{t}^{j} $ are i.i.d, and satisfy $w_{t}^{j} \thicksim \mathcal{N}(0,{I}_{n})$ 
and 
$Cov(w_{t}^j,w_{s}^k)=0$, 
for all $t,s\geq 0$ and $j \neq k \in \{1,2,...,M\}$.  

We assume that the applied control law $u_t$ is a linear function of state $x_t$ weighted by policy $\pi$ -- i.e., $u_t= \pi x_t$, 
with $\pi\in\mathbb{R}^{p \times n}$. We use a common (control) reward function $r(x,u)= \sqrt{x^{T}Qx+u^{T} R u}$, where
$Q \succcurlyeq 0 , R \succ  0$, and $Q,R \in  \mathbb{R}^{n \times n}$~\cite{bertsekas1995dynamic}. 
Hence, under the control law  $u=\pi x$, which we also denote as $u=\pi (x)$, we have that
$r(x,\pi x)= \sqrt{x^{T}(Q+\pi^{T}R \pi)x}$;
furthermore, the closed-loop dynamics of~\eqref{eq:sls} 
can be captured as
 \vspace{-2pt}
\begin{equation}
\label{eq:slcs}
x_{t+1}=\sum_{j=1}^{M} \big(\underbrace{(A_j +B_J\pi)}_{\hat{A}_j^{\pi}}x_t + w_{t}^{j}\big)\chi_{\mathcal{M}_j} (x_t)=\sum_{j=1}^{M} \left(\hat{A}_j^\pi x_t  +w_{t}^{j}\right)\chi_{\mathcal{M}_j} (x_t).
\end{equation} 
Finally, if $x_t$ from~\eqref{eq:slcs} under policy $\pi$ mixes to a stationary distribution $\nu_{\pi}$, we define the steady-state reward $\rho(\pi)$ associated with the policy $\pi$ as $\rho(\pi) = \mathbb{E}_{ x \thicksim \nu_{\pi}} r(x, \pi(x))$.

The contributions of this paper are twofold. 
First, we derive sufficient conditions under which samples of the closed-loop system trajectories from~\eqref{eq:slcs}, under policy $\pi$, mix geometrically to a unique ergodic invariant measure $\nu_{\pi}$ (in Section~\ref{sec:mixing}). 
Second, when the closed-loop system in~\eqref{eq:slcs} satisfies the derived conditions, leveraging Birkhoff's pointwise ergodic theorem, which implies
\begin{equation}
\label{eq:rho}
\lim_{N \rightarrow \infty} \frac{1}{N} \sum\nolimits_{t=0}^{N-1} r(x_t, \pi(x_t)) \xrightarrow{a.s}  \int r(y,\pi(y)) \nu_{\pi} (dy) =: \rho(\pi),
\end{equation}
we provide finite sample analysis for learning $\rho(\pi)$, defined in~\eqref{eq:rho}, with high probability from time averages $ \frac{1}{N} \sum_{t=0}^{N-1} r(x_t, \pi(x_t))$ (in Section~\ref{sec:samples}).\footnote{Since 
$r(x,\pi x)= \sqrt{x^{T}(Q+\pi^{T}R \pi)x}$ is effectively a function of $x$, to simplify our notation, from now on we will use $r(x_t)$ instead of $r(x_t, \pi(x_t))$; the role of policy $\pi$ will also  always be clear from $\nu_{\pi}$.} 
We show that the complexity of the sample analysis linearly depends on the size of the state space as opposed to quadratic dependence when a discounted LQR is used on commonly considered linear Gaussian dynamical systems (e.g.,~as in \cite{tu2018least}). 
Finally, we validate the presented  results on case studies (in Section~\ref{sec:case}).



\section{Mixing of SLDSs to Invariant Distributions}
\label{sec:mixing}

We start by considering a  linear time-invariant 
dynamical system with a state-space representation 
\begin{equation}
\label{eq:LDS}
    x_{t+1}= Ax_t +w_t,
\end{equation}
where $(w_t)_{t \in \mathbb{N}}$ is i.i.d with distribution $\mathcal{N}(0,{I}_n)$ and spectral radius $\rho(A)$ satisfies that $\rho(A)<1$. For $\mathcal{A} \in \mathbb{B}_{\mathbb{R}^{n}}$ and any $s \in \mathbb{N}$, one step transition kernel is defined as $P(x,\mathcal{A})= \mathbb{P}(x_{s+1} \in \mathcal{A} | x_s=x)$ and $k^\text{th}$ step transition kernel is denoted by $P^{k}(x,\mathcal{A})= \mathbb{P}(x_{s+k} \in \mathcal{A} | x_s=x)$.  
Proving existence of an ergodic invariant measure using a total variation approach, would require showing that for all $x,y \in \mathbb{R}^{n}$ it holds that $\lim_{t \rightarrow \infty} \|P^{t}(x,\cdot)-P^{t}(y,\cdot)\|_{tv} = 0$. 

For SLDS~\eqref{eq:LDS}, $P(x,\cdot)=\mathcal{N}(Ax, I_{n})$, and it is a common knowledge that the sequence from~\eqref{eq:LDS} mixes geometrically to a unique ergodic invariant Gaussian distribution~\cite{tu2018least}; at the same time $\sup_{(x,y) \in \mathbb{R}^{n} \times \mathbb{R}^{n} } \|P(x,\cdot)-P(y,\cdot)\|_{tv}=2$, making total variation approach infeasible for Gaussian kernel on unbounded state space. Adding to the difficulty of SLDS analysis, the transition kernel for the state sequence of the closed-loop system~\eqref{eq:slcs} is more complex than for linear time-invariant systems~\eqref{eq:LDS}, which is a standard benchmark 
in non-asymptotic analysis(e.g.,~\cite{abbasi2019model,tu2018least}).

This brings us to the theory of  \emph{Wasserstein metric and optimal transport}~\cite{villani2008optimal}, which is used to construct a metric on $\mathcal{P}(\mathbb{R}^n)\times\mathcal{P}(\mathbb{R}^n)$ under which~\eqref{eq:slcs} mixes to an invariant measure; here, $\mathcal{P}(\mathbb{R}^n)$ is the space of probability measures on $\mathbb{R}^n$. For a lower semi-continuous metric $d(x,y)$ on $\mathbb{R}^{n} \times \mathbb{R}^{n}$, Wasserstein metric on $\mathcal{P}(\mathbb{R}^n)$ is defined~as   
\begin{equation}
\label{eq:WM}
    \mathcal{W}_{d}^{1} (\nu,\mu)= \inf_{(X,Y) \in \Gamma(\nu,\mu)} \mathbb{E}~d(X,Y);
\end{equation}
here, $(\nu,\mu) \in \mathcal{P}(\mathbb{R}^n) \times \mathcal{P}(\mathbb{R}^n)$, and $(X,Y) \in \Gamma(\nu,\mu)$ implies that random variables $(X,Y)$ follow probability distributions on $\mathbb{R}^{n} \times \mathbb{R}^{n}$ with marginals $\nu$ and $\mu$. For example, if $d(x,y):= \chi_{x \neq y}(x,y)$, it follows that 
\begin{equation}
\label{eq:Wtv}
\mathcal{W}^{1}(\nu,\mu)= \inf_{(X,Y) \in \Gamma(\nu,\mu)} \mathbb{P}(X \neq Y)= \frac{1}{2}\|\nu-\mu\|_{tv}.   
\end{equation}

For any function $V: \mathbb{R}^{n} \rightarrow [0,\infty)$, we define 
$\mathcal{P}V(x):=\int_{\mathbb{R}^n}V(y)P(x,dy)$.\footnote{Notice that although we abuse the notation and use $\mathcal{P}$ to represent both the transition operator and the space of probability measures, its purpose and specific meaning will always be unambiguous from the context.} Intuitively speaking, to prove existence of an invariant measure, our goal is to show that for all $x,y \in \mathbb{R}^{n}$ their exists some metric $d_{\mathcal{P}}$ on $\mathcal{P}(\mathbb{R}^n) \times \mathcal{P}(\mathbb{R}^n)$ such that $d_{\mathcal{P}}$ acts as a contraction on the transition kernel of~\eqref{eq:slcs} --  i.e.,
\begin{equation}
\label{eq:t6}
d_{\mathcal{P}}\left(P^{t}(x,\cdot),P^{t}(y,\cdot)\right) \leq \eta\cdot d_{\mathcal{P}}\left(P^{t-1}(x,\cdot),P^{t-1}(y,\cdot)\right),
\end{equation}
for some $\eta<1$ and all $t \in \mathbb{N}$. If the space $\mathcal{P}(\mathbb{R}^n)$ is complete under the metric $d_{\mathcal{P}}$,~\eqref{eq:t6} implies existence of a unique invariant measure that the SLDS from~\eqref{eq:slcs} mixes to.~\cite{hairer2010convergence} introduces corresponding easy-to-verify conditions; if there exist function $V(x), \gamma \in (0,1), K \in \mathbb{R}$, and $\alpha > 0$~s.t.
%
    \begin{align}
    \label{eq:EIcond1}
    (i)&\hspace{10pt} \mathcal{P}V(x) \leq \gamma V(x)+ K,~\hspace{50pt}\text{for all }~x \in \mathbb{R}^{n}\\
    \label{eq:EIcond2}
    (ii)&\hspace{10pt}  \|P(x,\cdot)-P(y,\cdot)\|_{tv} \leq 2(1-\alpha),\hspace{8pt} \text{for all }~{(x,y) \in \mathbb{R}^{n} \times \mathbb{R}^{n}~s.t.~ V(x)+V(y) \leq \hat{r} },
    \end{align}
    with $\hat{r} > \frac{2K}{1-\gamma}$,
then system~\eqref{eq:slcs}  mixes geometrically to a unique ergodic invariant measure.\footnote{$V$ can be thought of as a Lyapunov function of the dynamical system; Lyapunov functions are widely used in control theory to capture energy of the system in a way that facilitates reasoning about system stability~\cite{van2007stochastic}.} 

In the above condition,~\eqref{eq:EIcond1} ensures that the dynamical system is being `pushed' to a neighborhood of the origin in $\mathbb{R}^{n}$.~\eqref{eq:EIcond2} implies existence of a sufficiently large level set such that any two trajectories, with distinct initial conditions inside the level set, can be `coupled' together with positive probability; i.e., 
if $X \thicksim P(x,\cdot)$ and $Y \thicksim P(y,\cdot)$ for $x,y$ in the aforementioned sufficiently large level set, then $\mathbb{P}(X \neq Y) \leq (1-\alpha)$ (see \cite{kulik2015introduction} for more details) and $\mathbb{P}(X_{n} \neq Y_{n}) \leq (1-\alpha)^n$. This implies that distinct trajectories overlap with positive probability and it becomes more and more unlikely as $n$ increases that they diverge away from each other. 
As shown in~\cite{hairer2010convergence,eberle2015markov} ,~\eqref{eq:EIcond1} and~\eqref{eq:EIcond2} ensure existence of a probability metric which acts as a contraction on the transition kernel, i.e., ~\eqref{eq:t6} holds for $ d_\mathcal{P}:=\mathcal{W}_{d_{{\beta}^\ast}}^{1}$, as defined in~\eqref{eq:WM}, where $d_{\beta^\ast}(x,y):=(2+\beta^\ast V(x)+ \beta^\ast V(y))\cdot\chi_{x \neq y}(x,y)$ for some $\beta^\ast>0$ and $\eta < 1$; note here that $\eta$ depends on $\alpha$ and $\beta^\ast$. 
Furthermore, completeness of $\mathcal{P}(\mathbb{R}^{n})$ equipped with metric $\mathcal{W}_{d_{\beta^\ast}}^{1}$ directly follows from lower semi-continuity of $d_{\beta^\ast}(x,y)$.

\begin{theorem}
\label{thm:t1}
Consider a control law $u_t=\pi(x_t)$, and assume that there exists $ \varrho< \infty$ 
such that for all $k_{\text{unbdd}} \in \mathcal{K}_{unbdd}:= \left\{ k ~|~ ( 1\leq k \leq M)  \text{ and } 
\left((\mathcal{M}_k \cap (\mathcal{B}^n_{\varrho})^\complement)
\neq \emptyset \right) \right\}$, it holds that $\| \hat{A}_{k_{\text{unbdd}}}^\pi \|_{2}<1$. Then,~the system \eqref{eq:slcs} mixes geometrically to a unique ergodic invariant distribution~$\nu_{\pi}$.
\end{theorem}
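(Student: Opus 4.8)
The plan is to verify the two sufficient conditions \eqref{eq:EIcond1}--\eqref{eq:EIcond2} recalled above (from \cite{hairer2010convergence}) for the closed-loop transition kernel $P(x,\cdot)=\mathcal{N}\big(\hat{A}^{\pi}_{j(x)}x,\,I_n\big)$, where $j(x)\in\{1,\dots,M\}$ denotes the unique index with $x\in\mathcal{M}_{j(x)}$, and then invoke that result directly. I would take as candidate Lyapunov function $V(x):=\|x\|_2$; its sublevel sets are the balls $\mathcal{B}^n_c$ and hence compact, which is all that is required of $V$.

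For the drift condition \eqref{eq:EIcond1}, the triangle inequality together with $\mathbb{E}\|w\|_2\le\sqrt{n}$ (Jensen applied to the $\chi_n$ law) gives, for every $x$,
\[
\mathcal{P}V(x)=\mathbb{E}\big\|\hat{A}^{\pi}_{j(x)}x+w\big\|_2\;\le\;\|\hat{A}^{\pi}_{j(x)}\|_2\,\|x\|_2+\sqrt{n}.
\]
The key observation is that whenever $\|x\|_2>\varrho$ one has $x\in\mathcal{M}_{j(x)}\cap(\mathcal{B}^n_\varrho)^\complement\neq\emptyset$, so $j(x)\in\mathcal{K}_{unbdd}$ and therefore $\|\hat{A}^{\pi}_{j(x)}\|_2\le\gamma:=\max_{k\in\mathcal{K}_{unbdd}}\|\hat{A}^{\pi}_k\|_2<1$; for $\|x\|_2\le\varrho$ I would only use the crude bound $\mathcal{P}V(x)\le\bar\sigma\varrho+\sqrt{n}$, with $\bar\sigma:=\max_{1\le j\le M}\|\hat{A}^{\pi}_j\|_2$. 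Since $\gamma\|x\|_2\ge 0$, combining the two cases yields \eqref{eq:EIcond1} with this $\gamma<1$ and $K:=\bar\sigma\varrho+\sqrt{n}$.

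For the small-set/local-contraction condition \eqref{eq:EIcond2}, fix any $\hat{r}>\tfrac{2K}{1-\gamma}$ (this is a free choice). For $(x,y)$ with $V(x)+V(y)=\|x\|_2+\|y\|_2\le\hat{r}$, both $P(x,\cdot)$ and $P(y,\cdot)$ are Gaussians with the \emph{same} covariance $I_n$, so, reducing to the scalar case along the direction of $\hat{A}^{\pi}_{j(x)}x-\hat{A}^{\pi}_{j(y)}y$,
\[
\|P(x,\cdot)-P(y,\cdot)\|_{tv}=2\Big(1-2\Phi\big(-\tfrac12\big\|\hat{A}^{\pi}_{j(x)}x-\hat{A}^{\pi}_{j(y)}y\big\|_2\big)\Big),
\]
where $\Phi$ is the standard normal CDF. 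Bounding $\|\hat{A}^{\pi}_{j(x)}x-\hat{A}^{\pi}_{j(y)}y\|_2\le\bar\sigma(\|x\|_2+\|y\|_2)\le\bar\sigma\hat{r}$ then gives \eqref{eq:EIcond2} with $\alpha:=2\Phi(-\bar\sigma\hat{r}/2)>0$. With \eqref{eq:EIcond1}, \eqref{eq:EIcond2}, and $\hat{r}>\tfrac{2K}{1-\gamma}$ established, the cited theorem furnishes a unique invariant measure $\nu_\pi$ and a contraction of the form \eqref{eq:t6} for $d_\mathcal{P}=\mathcal{W}_{d_{\beta^\ast}}^{1}$, i.e.\ \eqref{eq:slcs} mixes geometrically to the ergodic measure $\nu_\pi$.

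The only delicate point, and the step I would be most careful about, is the bookkeeping in the drift estimate: matching the polyhedral partition $\{\mathcal{M}_j\}$ with the norm hypothesis, i.e.\ arguing that every state with $\|x\|_2>\varrho$ necessarily lies in a region whose closed-loop matrix is a strict contraction, and that regions straddling the sphere $\partial\mathcal{B}^n_\varrho$ still belong to $\mathcal{K}_{unbdd}$ and are therefore controlled on all of $\mathcal{M}_k$ (including the portion inside the ball, where only the trivial bound via $\bar\sigma$ would otherwise be available). Everything else — the Gaussian total-variation identity, $\mathbb{E}\|w\|_2\le\sqrt{n}$, and compactness of the sublevel sets of $V$ — is routine.
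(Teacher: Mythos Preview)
Your argument is correct and follows the same overall strategy as the paper --- verify the drift condition \eqref{eq:EIcond1} and the local Doeblin condition \eqref{eq:EIcond2}, then invoke \cite{hairer2010convergence} --- but the two verifications differ in detail.

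For the drift, the paper takes $V(x)=\|x\|_2^2$ and obtains $\mathcal{P}V(x)\le\gamma V(x)+(n+c\varrho^2)$ with $\gamma=\max_{j\in\mathcal{K}_{unbdd}}\|\hat A_j^\pi\|_2^2$, whereas you use $V(x)=\|x\|_2$ with $\gamma=\max_{j\in\mathcal{K}_{unbdd}}\|\hat A_j^\pi\|_2$ and $K=\bar\sigma\varrho+\sqrt n$. Both work; the paper's quadratic choice is motivated by its downstream use in Theorems~\ref{thm:t2}--\ref{thm:thm3}, where the bound $\int V\,d\nu_\pi\le K/(1-\gamma)$ controls second moments of the stationary law and feeds into $\|\overline r\|_{\hat V^{1/2}}$.

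For \eqref{eq:EIcond2} your route is genuinely different and more direct. The paper argues that $\alpha_{(x,y)}:=\int f_x\wedge g_y>0$ for each $(x,y)\in\mathcal{D}$ (Claim~\ref{cl:cl3} in the Appendix, treated by case analysis) and then appeals to compactness of $\mathcal{D}$ to extract a uniform $\alpha>0$; that last step tacitly needs lower semi-continuity of $(x,y)\mapsto\alpha_{(x,y)}$, which is not entirely obvious given that $x\mapsto\hat A_{j(x)}^\pi x$ may jump across region boundaries. Your use of the closed-form identity $\|\mathcal{N}(\mu_1,I_n)-\mathcal{N}(\mu_2,I_n)\|_{tv}=2\big(1-2\Phi(-\tfrac12\|\mu_1-\mu_2\|_2)\big)$ combined with the uniform bound $\|\hat A_{j(x)}^\pi x-\hat A_{j(y)}^\pi y\|_2\le\bar\sigma\hat r$ sidesteps that issue entirely and yields an explicit $\alpha=2\Phi(-\bar\sigma\hat r/2)$. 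The ``delicate point'' you flag at the end is not actually delicate: if $\|x\|_2>\varrho$ then $x\in\mathcal{M}_{j(x)}\cap(\mathcal{B}^n_\varrho)^\complement$, so $j(x)\in\mathcal{K}_{unbdd}$ by definition --- no further bookkeeping about straddling regions is needed.
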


Before proving Theorem~\ref{thm:t1}, note that the theorem assumption is that, given a control law $u_t=\pi(x_t)$ there exists a bounded ball around the origin where the closed-loop dynamics might be unstable, but is  stable outside the ball. 
The bounded set is essentially same as the aforementioned `sufficiently large level set', and to show that~\eqref{eq:EIcond1} and \eqref{eq:EIcond2} hold, we proceed as follows.
\begin{proof}
Consider function $V(x)=\|x\|_{2}^2$. From~\eqref{eq:slcs}, we have $P(x,\mathcal{A})= \sum_{j=1}^{M}P_j(x,\mathcal{A})\chi_ {\mathcal{M}_j}(x)$, where $P_j(x,\cdot) \thicksim \mathcal{N}(\hat{A}_j ^{\pi} x, {I}_n)$. 
We define $c := \max_{k \in \mathcal{K}_{unbdd}^\complement}
\|\hat{A}_k ^{\pi}\|_2 ^{2}$ 
and $\gamma:=\max_{j \in \mathcal{K}_{unbdd} } \|\hat{A}_j ^{\pi}\|_2 ^{2}$; then, from the theorem assumption it holds that $\gamma < 1 $.

If we  assume that the initial state  $x_0:=x$ satisfies $\|x\|_{2} \leq \varrho$, then  there exists a $k \in \mathcal{K}_{unbdd}^\complement$  such that 
\begin{equation}
\label{eq:t9}
 \mathcal{P}V(x)= \mathbb{E}_{y \thicksim \mathcal{N}(\hat{A}_k ^{\pi}x, I_n)} \|y\|_{2} ^{2}= \mathbb{E}_{z \thicksim \mathcal{N}(0,I_n)} \|z\|_2 ^{2} + \|\hat{A}_k ^{\pi}x\|_{2} ^{2} \leq (n+c\varrho^{2}).
\end{equation}
However, if the initial state is $x_0:=x$ such that $ \|x\|_{2} > \varrho$, then there exists $j \in  \mathcal{K}_{unbdd}$  such~that
\begin{equation}
\label{eq:t10}
\mathcal{P}V(x)= \mathbb{E}_{y \thicksim \mathcal{N}(\hat{A}_j ^{\pi}x, I_n)} \|y\|_{2} ^{2}= \mathbb{E}_{z \thicksim \mathcal{N}(0,I_n)} \|z\|_2 ^{2} + \|\hat{A}_j ^{\pi}x\|_{2} ^{2} \leq n+\gamma \|x\|_{2}^{2}=(n+\gamma V(x)).
\end{equation}

Therefore, starting from any initial condition in $\mathbb{R}^n$, from~\eqref{eq:t9} and~\eqref{eq:t10} it holds that 
\begin{equation}
\label{eq:t11}
\mathcal{P}V(x) \leq \gamma V(x) + \underbrace
{(n+c\varrho^{2})}_{K},
\end{equation}
and thus~\eqref{eq:EIcond1} holds for the closed-loop dynamical system from~\eqref{eq:slcs} under the theorem assumptions. 

To show~\eqref{eq:EIcond2}, we define 
\begin{equation}
\label{eq:r}
    \hat{r}:= \frac{2(n+c\varrho^{2})}{\gamma(1-\gamma)} \qquad \Rightarrow \qquad \hat{r} > \frac{2(n+c\varrho^{2})}{(1-\gamma)},
\end{equation}
where the right side holds since $\gamma <1$. 
For any $x,y \in \mathbb{R}^{n}$, $P(x,\cdot)$ and $P(y,\cdot)$ follow $\mathcal{N}(\hat{A}_j ^{\pi}x, {I}_n)$ and $\mathcal{N}(\hat{A}_k ^{\pi}y, {I}_n)$ respectively, for some $j, k \in \{1,2,..,M\}$. 
Gaussians are absolutely continuous w.r.t. Lebesgue measure, implying that $\|P(x,\cdot)-P(y,\cdot)\|_{tv}= 2- 2 \int_{\mathbb{R}^n} (f_{x}(z) \land g_{y}(z))dz$ (see e.g.,~\cite{van2014probability}), with $f_{x}(z)$ and $g_{y}(z)$ being the density functions of $P(x,\cdot)$ and
$P(y,\cdot)$, respectively. Now, let us define 
 $$\alpha_{(x,y)}:=\int_{\mathbb{R}^n} (f_{x}(z) \land g_{y}(z)) dz$$
We have that $\alpha_{(x,y)}=0 \iff  f_{x}(z) \land g_{y}(z) =0 $
almost everywhere (a.e.) w.r.t. Lebesgue measure on $\mathbb{R}^n$~\cite{folland2013real}. Thus, the existence of $\alpha_{(x,y)} > 0$, 
for any specific $(x,y)$, directly follows if we can show that $f_{x}(z) \land g_{y}(z)>0 $ a.e. w.r.t. Lebesgue measure on $\mathbb{R}^n$. We show in  Appendix under the heading of Claim \ref{cl:cl3},  
that for each $(\hat{x}, \hat{y}) \in \mathcal{D}$, where $\mathcal{D}:=\{ (x,y) \in \mathbb{R}^n \times \mathbb{R}^n : V(x)+V(y) \leq \hat{r}  \}$ and $\hat{r}$ is defined in~\eqref{eq:r}, 
it holds that $\alpha_{(\hat{x}, \hat{y})} >0$. Now, let $(\mathcal{X}, \tau_x)$ and $(\mathcal{Y}, \tau_y)$ be topological vector spaces. Consider the space $\mathcal{X} \times \mathcal{Y}$ equipped with product topology $\tau_{(x,y)}:= \tau\left( \pi_1 ^{-1} (\mathcal{A}) \cap \pi_2 ^{-1} (\mathcal{B}), \forall \mathcal{A} \in \tau_x, \forall \mathcal{B} \in \tau_y\right)$, where $\pi_1(x,y):=x$ and $\pi_2(x,y):=y$; i.e., the smallest topology under which projection maps are continuous. Then on $(\mathcal{X} \times \mathcal{Y},\tau_{(x,y)})$, $V(x)+V(y) \leq \hat{r}$ implies that $V \circ \pi_1(x,y) +V \circ \pi_2(x,y) \leq \hat{r}$. 
As for $\eqref{eq:slcs}$, $\mathcal{X}$, $\mathcal{Y}$ $=\mathbb{R}^n$ and $\tau_x, \tau_y$ coincides with usual metric topology on $\mathbb{R}^n$. $\tau_{(x,y)}$ coincides with usual metric topology on $\mathbb{R}^{2n}$. Since, $V$ is continuous by construction, composition of two continuous functions is  continuous and addition of continuous functions is again continuous. Therefore, $(x,y) \longmapsto V(x)+V(y) $ is a continuous mapping from $(\mathcal{X} \times \mathcal{Y},\tau_{(x,y)})$ to $(\mathbb{R},\tau_{\mathbb{R}})$, where $\tau_{\mathbb{R}}$ is the topology that coincides with the usual metric topology on $\mathbb{R}$. Now, $[0,\hat{r}]$ is compact in $\tau_{\mathbb{R}}$ and preimage of a compact subset of $(\mathbb{R},\tau_{\mathbb{R}})$ under a continuous function is compact (see e.g., ~\cite{folland2013real}). Hence,           $\mathcal{D}$ is compact subset of $(\mathcal{X} \times \mathcal{Y},\tau_{(x,y)})$. Since infimum and supremum are always attained on a compact subset, we have that $\alpha:= \inf_{(x,y) \in \mathcal{D}} \alpha_{(x,y)} >0 $ and $\|P(x,\cdot)-P(y,\cdot)\|_{tv} \leq 2(1-\alpha) $ for all $(x,y) \in \mathcal{D}$; meaning that~\eqref{eq:EIcond2} also holds.
\end{proof}

\section{Non-Asymptotic Analysis of Learning the Expected Reward w.r.t $\nu_{\pi_{}}$}
\label{sec:samples}

Geometric ergodicity of~\eqref{eq:slcs} under the assumptions from Theorem~\ref{thm:t1} allows us to use Birkhoff's pointwise ergodic theorem, from which it holds that
\begin{equation}
\label{eq:t-d}
    \frac{1}{N}  \sum_{i=0}^{N-1} h(x_i) \xrightarrow{a.s} \int_{\mathbb{R}^{n}}h(y) \nu_{\pi} (dy), 
\end{equation}
for any function $h \in \mathcal{L}^{1}(\mathbb{R}^{n}, \mathbb{B}_{\mathbb{R}^{n}}, \nu_{\pi})$, (see e.g., \cite{hairer2006ergodic}). 
Furthermore, note that~\eqref{eq:t11} implies that $ \int_{\mathbb{R}^{n}} V(y) \nu_{\pi}(dy):=\int_{\mathbb{R}^{n}} \|y\|_{2} ^{2} \nu_{\pi}(dy) \leq \frac{n+c\varrho^{2}}{1 - \gamma}$, \cite{hairer2006ergodic}. Hence, for any reward function $r(x)$ that can be written as $(x^{T}\hat{P}x)^q$, 
for some $\hat{P} \succ 0$ and $q \leq 1$, it follows that $ \lim_{N \rightarrow \infty} \frac{1}{N} \sum_{i=0}^{N-1}  r(x_i) \overset{a.s}{=}  \int_{\mathbb{R}^{n}}r(y) \nu_{\pi} (dy) < \infty $ for the SLDS~\eqref{eq:slcs} under the assumptions from Theorem~\ref{thm:t1}. 

Almost sure (a.s) convergence in \eqref{eq:t-d} allows for learning $\mathbb{E}_{ y \thicksim \nu_{\pi}} r(y)$ non-asymptotically from the observable time averages $\frac{1}{N}  \sum_{i=0}^{N-1} r(x_i)$. 
Our objective is to bound $N$ such that given $\epsilon > 0$ and $\delta \in (0,1)$ their exists $N(\epsilon,\delta)$ such that  for all $N \geq N(\epsilon,\delta)$, with probability at least $1-\delta$, it holds~that 
\begin{equation}
\label{eq:t-d-N}
    \left|\frac{1}{N}  \sum_{i=0}^{N-1} r(x_i)  - \int_{\mathbb{R}^{n}}r(y) \nu_{\pi}(dy) \right| \leq \epsilon 
\end{equation}
If $(x_i)_{i \in \mathbb{N}}$ are i.i.d random variables, non-asymptotic bounds for \eqref{eq:t-d-N} follows trivially from Hoeffding's inequality~\cite{boucheron2013concentration,bertail2018new}. This is certainly not the case for samples generated from the SLDS~\eqref{eq:slcs}, which brings us to the theory of regenerative Markov chains~\cite{athreya2014markov}. 
Intuitively, the idea is to append original Markov chain $(x_i)_{i \in \mathbb{N}}$ into an artificial stochastic process $y_i :=(\hat{x_i},\vartheta_i)_{i \in \mathbb{N}}$, such that the conditional distribution of $\hat{x_i}$ given $\hat{x}_{i-1}$  is the same as that $x_i$ given $x_{i-1}$ for each $i \in \mathbb{N}$ and $\vartheta_i$ is a Bernoulli random variable. 

Specifically, let us assume that for Markov chain $(x_i)_{i \in \mathbb{N}}$ there exist a $\beta \in (0,1)$, 
a subset $\mathcal{S} \subset \mathbb{R}^{n}$, and a probability measure $\hat{\nu}$, such that $\inf_{x \in \mathcal{S}} P(x,\cdot) \geq \beta \hat{\nu}(\cdot)$. Then we can define a valid residual kernel $R(x,\cdot)= \frac{P(x,\cdot)-\beta\cdot \chi_{\mathcal{S}}(x)\hat{\nu}(\cdot)}{1-\beta\cdot\chi_{\mathcal{S}}(x) }$. 
Now, conditioned on $\hat{x}_{i-1}=x$, we define $\mathbb{P}\left(\vartheta_{i-1} =1| \hat{x}_{i-1}=x \right)= \beta\cdot \chi_{\mathcal{S}}(x)$; it directly follows that $\mathbb{P}\left(\vartheta_{i-1} =0| \hat{x}_{i-1}=x \right)=1-\beta\cdot \chi_{\mathcal{S}}(x)$. In addition, we define the one-step transition probability from $(\hat{x}_{i-1})$ to $(\hat{x}_i)$ as
\begin{equation}
    \label{eq:reg}
    \mathbb{P}(\hat{x}_i \in \mathcal{A}|\hat{x}_{i-1}=x)= \beta\cdot \chi_{\mathcal{S}}(x)\hat{\nu}(\mathcal{A})+\left( 1-\beta\cdot \chi_{\mathcal{S}}(x)  \right)R(x,\mathcal{A}).
\end{equation}
It is a straightforward check that $\mathbb{P}(\hat{x}_i \in \mathcal{A}|\hat{x}_{i-1}=x)= P(x,\mathcal{A})$. Now, let us define the first regeneration time $T=T_1=\tau:= \tau_{1}= \inf (t \geq 1: \vartheta_{t-1}=1)$, and for each $m \in \mathbb{N}$, $m^\text{th}$ regeneration time $\tau_m := \inf (t > \tau_{m-1}: \vartheta_{t-1}=1)$, $m^\text{th}$ excursion length of the Markov chain as $T_m:= \tau_{m}-\tau_{m-1}$, as well as  $B_{m}:=(\tau_m,\ldots,\tau_{m+1}-1)$ and $x_{B_{m}}=(x_{\tau_m},\ldots, x_{\tau_{m+1}-1})$. If $ \hat{x}_0= x_0 \thicksim \hat{\nu}(\cdot)$, then strong Markov property implies that we can break $(x_i)_{i \in \mathbb{N}}$ into i.i.d blocks process $(x_{{B}_{i}})_{i \in \mathbb{N}}$. Notice that $\mathbb{E}_{\hat{\nu}} T= \mathbb{E}_{\psi} T_m$ for any $m \geq 2$ and any distribution $\psi$ such that $x_0 \thicksim \psi $ (see~\cite{bertail2018new} for more details). Using the Law of Large Numbers on the i.i.d blocks, it can be easily verified (see e.g., \cite{athreya2006measure}) that the invariant measure $\nu_{\pi}$ satisfies that for any $\mathcal{A} \in \mathbb{B}_{\mathbb{R}^{n}}$, it holds that
\begin{equation}
    \label{eq: invm}
    \nu_{\pi}(\mathcal{A}) = \frac{\mathbb{E}_{\hat{\nu}} \sum_{i=0}^{T -1} \chi_{\mathcal{A}} (x_i) }{\mathbb{E}_{\hat{\nu}} T }.
\end{equation}
If we also define $\overline{r}(x_i):= r(x_i)- \mathbb{E}_{y \thicksim \nu_{\pi}} r(y)$, $R(N):= \min(k \geq 1 : \tau_k > N )$ and $\Delta(N):= \tau_{R(N)}-N $, then for each $N \in \mathbb{N}$, it holds that 
\begin{equation}
\label{eq: overs}
\frac{1}{N} \sum_{i=0}^{N-1}  \overline{r}(x_i) = \frac{1}{N} \underbrace{ \sum_{i=0}^{\tau -1} \overline{r}(x_i)}_{\mathcal{O}_1} + \frac{1}{N} \underbrace{\sum_{i= \tau} ^{\tau_{R(N)}-1} \overline{r}(x_i)}_{\mathcal{Z}} - \frac{1}{N} \underbrace{\sum_{i=N}^{\tau_{R(N)}-1} \overline{r}(x_i)}_{\mathcal{O}_2}=:\frac{1}{N}(\mathcal{O}_1 + \mathcal{Z} - \mathcal{O}_2);
\end{equation}
note that here, $\mathcal{Z}$ is essentially the sum of $R(N)-1$  i.i.d blocks . Finally, any non-asymptotic bounds on \eqref{eq:t-d} with $x_0 \thicksim \psi$  would require bounding  $\frac{1}{N} \left( \sqrt{ \mathbb{E}_{\psi} \mathcal{Z}^2} + \sqrt{ \mathbb{E}_{\psi} (\mathcal{O}_1 - \mathcal{O}_2 )^{2} }    \right) $~\cite{latuszynski2013nonasymptotic}.   

To use the aforementioned concept for the SLDS \eqref{eq:slcs}, 
for $\gamma \in (0,1)$, we  define $\hat{V}: \mathbb{R}^n \rightarrow [1, \infty) $ as
\vspace{-2pt}
\begin{flalign}
\label{eq:hatV}
 & \hat{V}(x):= (1+ \frac{1-\gamma}{2}\frac{\|x\|_{2}^{2}}{n}).
\end{flalign}
Now start with the following result.

\begin{theorem}
\label{thm:t2}
An SLDS~\eqref{eq:slcs} under assumptions in Theorem \ref{thm:t1}, with $\mathcal{S}:=\mathcal{B}_{\sqrt{2(n+c\varrho^{2}+1)}}^n=\{x \in \mathbb{R}^n : \|x\|_{2}^{2} \leq 2(n+c\varrho^{2}+1)\}$ and some probability measure $\hat{\nu}$ on $\mathbb{R}^{n}$, satisfies that
\vspace{-2pt}
\begin{equation}
\label{eq:2t}
(\mathcal{P}\hat{V})(x) \leq \lambda\hat{V}(x) + K_2 \chi_{\mathcal{S}}(x), ~~~~\lambda \in (\gamma,1), K_2\in\mathbb{R},
\end{equation} 
\vspace{-8pt}
\begin{equation}
\label{eq:minorize}
\inf_{x \in \mathcal{S}} P(x,\cdot) \geq \beta  \hat{\nu}(\cdot).   
\end{equation}
\end{theorem}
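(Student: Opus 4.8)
The plan is to verify the two displayed claims of Theorem~\ref{thm:t2} separately: the geometric drift \eqref{eq:2t} for the modified Lyapunov function $\hat V$ of \eqref{eq:hatV}, and the minorization \eqref{eq:minorize} of the one-step kernel on the compact set $\mathcal S$. Both reduce to elementary computations with the Gaussian transition kernel $P(x,\cdot)=\mathcal N(\hat A^\pi_{j(x)}x,I_n)$, where $j(x)$ is the unique index with $x\in\mathcal M_{j(x)}$, combined with the region-wise spectral bounds already exploited in Theorem~\ref{thm:t1}. Since enlarging $\varrho$ only shrinks $\mathcal K_{unbdd}$ and hence only weakens the hypothesis of Theorem~\ref{thm:t1} (while not increasing $\gamma$), I will assume throughout that $\varrho$ has been taken large enough that $\mathcal B^n_\varrho\subseteq\mathcal S$; this inclusion, and the strict positivity of the resulting contraction margin, is the one delicate point, which I return to at the end.

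For the drift, compute, exactly as in \eqref{eq:t9}--\eqref{eq:t10},
$\mathcal P\hat V(x)=\mathbb E_{y\sim\mathcal N(\hat A^\pi_{j(x)}x,I_n)}\hat V(y)=1+\frac{1-\gamma}{2n}\big(n+\|\hat A^\pi_{j(x)}x\|_2^2\big)=1+\frac{1-\gamma}{2}+\frac{1-\gamma}{2n}\|\hat A^\pi_{j(x)}x\|_2^2$.
Now split on whether $x\in\mathcal S$. If $x\notin\mathcal S$, then $\|x\|_2>\varrho$ (by $\mathcal B^n_\varrho\subseteq\mathcal S$), so $j(x)\in\mathcal K_{unbdd}$ and $\|\hat A^\pi_{j(x)}\|_2^2\le\gamma$, giving $\mathcal P\hat V(x)\le 1+\frac{1-\gamma}{2}+\gamma\frac{1-\gamma}{2n}\|x\|_2^2$. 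Writing $t:=\frac{1-\gamma}{2n}\|x\|_2^2$, so that $\hat V(x)=1+t$, the target $\mathcal P\hat V(x)\le\lambda\hat V(x)$ is the scalar inequality $1+\frac{1-\gamma}{2}+\gamma t\le\lambda(1+t)$; the map $t\mapsto\big(1+\frac{1-\gamma}{2}+\gamma t\big)/(1+t)$ is strictly decreasing, tends to $\gamma$ as $t\to\infty$, and at the boundary value $t_0=\frac{1-\gamma}{2n}\cdot 2(n+c\varrho^2+1)$ takes a value in $(\gamma,1)$ once $\varrho$ is large enough, so any $\lambda$ in that nonempty interval (e.g. $\lambda$ equal to the boundary value) makes \eqref{eq:2t} hold with no additive term for all $x\notin\mathcal S$. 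If $x\in\mathcal S$, then $\|x\|_2^2\le 2(n+c\varrho^2+1)$ forces $\mathcal P\hat V(x)\le 1+\frac{1-\gamma}{2}+\frac{1-\gamma}{2n}\big(\max_{1\le j\le M}\|\hat A^\pi_j\|_2^2\big)\cdot 2(n+c\varrho^2+1)=:K_2$; since $\hat V\ge 1$ and the right side is a finite constant, \eqref{eq:2t} holds on $\mathcal S$ with this $K_2$.

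For the minorization, fix any $x\in\mathcal S$: then $P(x,\cdot)=\mathcal N(\mu_x,I_n)$ with $\|\mu_x\|_2=\|\hat A^\pi_{j(x)}x\|_2\le m:=\big(\max_{1\le j\le M}\|\hat A^\pi_j\|_2\big)\sqrt{2(n+c\varrho^2+1)}<\infty$. Pick any radius $r>0$. For every $z\in\mathcal B^n_r$ and every admissible $\mu_x$ one has $\|z-\mu_x\|_2\le r+m$, so the density of $P(x,\cdot)$ at $z$ is at least $(2\pi)^{-n/2}e^{-(r+m)^2/2}$. Hence, letting $\hat\nu(\mathcal A):=\lambda^n(\mathcal A\cap\mathcal B^n_r)/\lambda^n(\mathcal B^n_r)$ and $\beta:=(2\pi)^{-n/2}e^{-(r+m)^2/2}\lambda^n(\mathcal B^n_r)$, we get $P(x,\mathcal A)\ge\beta\,\hat\nu(\mathcal A)$ for all $\mathcal A\in\mathbb B_{\mathbb R^n}$ and all $x\in\mathcal S$; shrinking $r$ if necessary gives $\beta\in(0,1)$, which is \eqref{eq:minorize}. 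No continuity of $x\mapsto\mu_x$ is used here — only the uniform bound $\|\mu_x\|_2\le m$, which is unaffected by the region switches.

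The single step that genuinely requires care is verifying that the admissible window for $\lambda$ is nonempty with $\gamma<\lambda<1$, i.e. that the boundary ratio $\big(1+\frac{1-\gamma}{2}+\gamma t_0\big)/(1+t_0)$ is $<1$; this is equivalent to $t_0>\frac12$, i.e. to $(1-\gamma)(n+c\varrho^2+1)>\frac n2$. This is automatic when $\gamma\le\frac{n+2}{2(n+1)}$, and otherwise is obtained by enlarging $\varrho$ (which increases $c\varrho^2$ while not increasing $\gamma$, and weakens Theorem~\ref{thm:t1}'s hypothesis); I would flag this $\varrho$-choice, together with the inclusion $\mathcal B^n_\varrho\subseteq\mathcal S$, as the crux, and treat the two displayed inequalities as the routine Gaussian second-moment and Gaussian-overlap computations sketched above.
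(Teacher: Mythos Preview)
Your proposal is correct and follows essentially the same approach as the paper: compute $\mathcal P\hat V$ from the Gaussian second moment, show pure contraction outside $\mathcal S$ and a uniform bound inside, then minorize by lower-bounding the Gaussian density uniformly over a compact set and taking $\hat\nu$ to be normalized Lebesgue measure on that set. The only cosmetic differences are that the paper uses $\mathcal S$ itself (rather than your auxiliary ball $\mathcal B^n_r$) for the minorization support, and records a specific value $K_2=\tfrac{3}{2}+2c+c^2\varrho^2$; you are in fact more careful than the paper in flagging the feasibility constraint $t_0>\tfrac12$ (equivalently $(1-\gamma)(n+c\varrho^2+1)>\tfrac n2$) and the inclusion $\mathcal B^n_\varrho\subseteq\mathcal S$, both of which the paper's proof tacitly assumes without comment.
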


Note that the conditions~\eqref{eq:2t} and~\eqref{eq:minorize} are stronger than the respective conditions~\eqref{eq:EIcond1} and~\eqref{eq:EIcond2}. Since, \eqref{eq:minorize} implies \eqref{eq:EIcond2}, but the other direction does not always hold.
\begin{proof}
For $\|x\|_{2} > \varrho$, we are looking for a $\lambda \in (\gamma,1)$ and a region $\mathcal{C}$ such that 
$\mathcal{P}\hat{V}(x) \leq \lambda \hat{V}(x) $ for all $x \in \mathcal{S}^{c}:= \mathcal{C} \cap (\mathcal{B}_\varrho^n)^\complement$. 
Since (*) below holds from Theorem~\ref{thm:t1} proof, $\mathcal{P}\hat{V}(x) \leq \lambda \hat{V}(x) $ holds if
\begin{flalign}
& \nonumber \mathcal{P}\hat{V}(x) \stackrel{(*)}{\leq} 1+\frac{1-\gamma}{2n}(\gamma\|x\|_{2}^{2}+n) \leq \lambda \left(1+\frac{1-\gamma}{2n}\|x\|_{2}^{2} \right)  \implies (1-\lambda)+ \frac{1-\gamma}{2} \leq \frac{(\lambda-\gamma)(1-\gamma)}{2n}\|x\|_{2}^{2} \\ & \nonumber \implies \left(\frac{3-\gamma}{2} \right) -\lambda \leq \frac{(\lambda-\gamma)(1-\gamma)}{2n}\|x\|_{2}^{2} \quad \implies \quad \|x\|_{2}^{2} \geq \frac{\left(\frac{3-\gamma}{2} \right) -\lambda  }{(\lambda-\gamma)(1-\gamma)}2n > \frac{(1-\lambda)}{(\lambda-\gamma)(1-\gamma)}2n.  
\end{flalign}
Thus, any $\lambda \in (\gamma,1)$, $\mathcal{C}:= \{x \in \mathbb{R}^{n}: \|x\|_{2}^{2} > 2n\}$, $\mathcal{S}^{c}:=\{x \in \mathbb{R}^{n}:\|x\|_{2}^{2} > 2(n+c\varrho^{2}+1)\}$ and $K_2=\frac{3}{2}+2c +c^2\varrho^{2}$ would ensure that~\eqref{eq:2t} holds; this follows  from \eqref{eq:t11} and the fact that $n \geq 1$.

To show that~\eqref{eq:minorize} holds, we extend on the idea of \cite{douc2014long} for the system~\eqref{eq:slcs}. For the compact set $\mathcal{S}$ (from the theorem statement), if $x \in \mathcal{M}_{j} \cap \mathcal{S} $, we have that for $\mathcal{A} \in \mathbb{B}_{\mathbb{R}^{n}}$ it holds that 
\begin{flalign}
& \nonumber P(x,\mathcal{A})= \frac{1}{(2 \pi)^{\frac{n}{2}}} \int_{\mathcal{A}} \exp{\left(\frac{-\|y-\hat{A}_j^\pi x\|_{2}^{2}}{2}\right)}dy \geq \frac{1}{(2 \pi)^{\frac{n}{2}}} \int_{\mathcal{A} \cap \mathcal{S}} \exp{\left(\frac{-\|y-\hat{A}_j^\pi x\|_{2}^{2}}{2}\right)}dy \geq\\ & \nonumber \geq \frac{1}{(2 \pi)^{\frac{n}{2}}}  \inf_{(x,y) \in \mathcal{M}_{j}\cap \mathcal{S} \times \mathcal{S} } \exp{\left(\frac{-\|y-\hat{A}_j^\pi x\|_{2}^{2}}{2}\right)} \lambda^{n}(\mathcal{A} \cap \mathcal{S})  \geq\\ & \label{eq:vol} \geq \frac{1}{(2 \pi)^{\frac{n}{2}}}  \inf_{(x,y) \in \mathcal{M}_{j}\cap \mathcal{S} \times \mathcal{S} } \exp{\left(\frac{-\|y-\hat{A}_j^\pi x\|_{2}^{2}}{2}\right)} \frac{\lambda^{n}(\mathcal{A} \cap \mathcal{S})}{\lambda^{n}(\mathcal{S}) }  \geq\\ &  \label{eq:2t0} \geq \frac{1}{(2 \pi)^{\frac{n}{2}}} \inf_{(x,y) \in \mathcal{S} \times \mathcal{S} } \exp{\left(\frac{-\|y-\hat{A}_j^\pi x\|_{2}^{2}}{2}\right)} \frac{\lambda^{n}(\mathcal{A} \cap \mathcal{S}) }{\lambda^{n}(\mathcal{S}) }   
\end{flalign}
Here,~\eqref{eq:vol} follows from the fact that $\lambda ^{n}(\mathcal{S}) \geq n^{n} \frac{\pi^{\frac{n}{2}}}{\Gamma(1+ \frac{n}{2})} \geq 1$ (see e.g., \cite{folland2013real}), where $\Gamma(1+ \frac{n}{2})= \int_{0}^{\infty} x^{\frac{n}{2}}e^{-x}dx$. Therefore, for any $x \in \mathcal{S}$, we have 
\vspace{-2pt}
\begin{flalign}
& \nonumber P(x,\mathcal{A}) \geq \frac{1}{(2 \pi)^{\frac{n}{2}}} \Bigg( \inf_{(x,y) \in  \mathcal{S} \times \mathcal{S} } \exp{\left(\frac{-\|y-\hat{A}_1^\pi x\|_{2}^{2}}{2}\right)} \land \inf_{(x,y) \in  \mathcal{S} \times \mathcal{S} } \exp{\left(\frac{-\|y-\hat{A}_2^\pi x\|_{2}^{2}}{2}\right)}  \\ & 
\land \ldots 
\label{eq:2t1} \land \inf_{(x,y) \in \mathcal{S}  \times \mathcal{S} } \exp{\left(\frac{-\|y-\hat{A}_M^\pi x\|_{2}^{2}}{2}\right)} \Bigg) \frac{\lambda^{n}(\mathcal{A} \cap \mathcal{S})}{\lambda^{n}(\mathcal{S})}.
\end{flalign}
Therefore, \eqref{eq:minorize} holds with
\begin{equation}
\label{eq:2t2}
\beta=\frac{1}{(2 \pi)^{\frac{n}{2}}} \Bigg( \inf_{(x,y) \in  \mathcal{S} \times \mathcal{S} } \exp{\left(\frac{-\|y-\hat{A}_1^\pi x\|_{2}^{2}}{2}\right)}  \land \ldots  
\land \inf_{(x,y) \in  \mathcal{S}  \times \mathcal{S} } \exp{\left(\frac{-\|y-\hat{A}_M^\pi x\|_{2}^{2}}{2}\right)} \Bigg),
\end{equation}
where compactness of $\mathcal{S} \times \mathcal{S} $ in the product topology and finitely many `$\land$' operations ensures that $\beta \in (0,1)$, 
and
$\hat{\nu}(\cdot)=  \frac{\lambda^{n}(\cdot \cap \mathcal{S})}{ \lambda^{n}(\mathcal{S}) }.$
\end{proof}

\begin{theorem}
\label{thm:thm3}
Consider a (fixed) $\delta \in (0,1)$ and any reward function of the form $r(x)=\sqrt{x^{T} \hat{P} x}$, where $\hat{P} \succ 0$. If N satisfies $N \geq \Omega(\frac{n}{\beta \epsilon^2 \delta (1-\gamma)})$, then~\eqref{eq:t-d-N} holds with probability at least $1-\delta$. 
\end{theorem}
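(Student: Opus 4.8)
My plan is to prove the bound by a second–moment (Chebyshev) argument built on the regeneration structure of Section~\ref{sec:samples}; the exponents $\epsilon^{-2}$ and $\delta^{-1}$ in the claimed rate already signal that this, rather than an exponential concentration inequality, is the right tool. Run the split chain from $x_0\sim\hat{\nu}$ with $\mathcal{S},\beta,\hat{\nu}$ as in Theorem~\ref{thm:t2}, write $\overline{r}(x)=r(x)-\mathbb{E}_{y\thicksim\nu_{\pi}}r(y)$, and recall the decomposition \eqref{eq: overs}, $\tfrac{1}{N}\sum_{i=0}^{N-1}\overline{r}(x_i)=\tfrac{1}{N}(\mathcal{O}_1+\mathcal{Z}-\mathcal{O}_2)$. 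Chebyshev's inequality together with Minkowski's inequality in $L^2$ gives
\[
\mathbb{P}\!\left(\left|\frac{1}{N}\sum_{i=0}^{N-1}\overline{r}(x_i)\right|>\epsilon\right)\le\frac{1}{\epsilon^2 N^2}\left(\sqrt{\mathbb{E}[\mathcal{Z}^2]}+\sqrt{\mathbb{E}[\mathcal{O}_1^2]}+\sqrt{\mathbb{E}[\mathcal{O}_2^2]}\right)^{2},
\]
so it is enough to control these three second moments and then solve for $N$.

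The leading contribution is $\mathcal{Z}$, a sum of $R(N)-1\le N$ i.i.d.\ mean–zero regeneration blocks $\xi_m=\sum_{i\in B_m}\overline{r}(x_i)$ (mean zero by \eqref{eq: invm}, since $\mathbb{E}_{\nu_{\pi}}\overline{r}=0$). The standard Wald–type estimate for regenerative sums (as in~\cite{latuszynski2013nonasymptotic,bertail2018new}) yields $\mathbb{E}[\mathcal{Z}^2]\le N\sigma^2+(\text{lower order in }N)$ with $\sigma^2:=\mathbb{E}_{\hat{\nu}}[\xi_1^2]/\mathbb{E}_{\hat{\nu}}T$, so $\tfrac{1}{N^2}\mathbb{E}[\mathcal{Z}^2]\le\sigma^2/N+o(1/N)$. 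The edge terms $\mathcal{O}_1=\sum_{i=0}^{\tau-1}\overline{r}(x_i)$ and $\mathcal{O}_2=\sum_{i=N}^{\tau_{R(N)}-1}\overline{r}(x_i)$ are $O(1)$ in $N$, and this is where the stronger drift \eqref{eq:2t} for $\hat{V}$ (with $\hat{V}\ge1$ and $\mathcal{P}\hat{V}\le\lambda\hat{V}+K_2\chi_{\mathcal{S}}$) is used: it produces the usual finite moments of $\tau$ and of $\sum_{i=0}^{\tau-1}\hat{V}(x_i)$, and — via the strong Markov property at $\tau_{R(N)}$ and a uniform bound on $\sup_{x\in\mathcal{S}}\mathbb{E}_x[\text{overshoot}^2]$ — the analogous control of $\mathcal{O}_2$, which combined with the pointwise estimate $\overline{r}(x)^2\lesssim\tfrac{n}{1-\gamma}\hat{V}(x)$ derived below makes $\mathbb{E}[\mathcal{O}_1^2]$ and $\mathbb{E}[\mathcal{O}_2^2]$ finite constants independent of $N$. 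Hence $\mathbb{E}[(\tfrac{1}{N}\sum\overline{r}(x_i))^2]\le\sigma^2/N+C/N^2$, which past a (constant) threshold on $N$ is at most $2\sigma^2/N$.

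It remains to show $\sigma^2\lesssim\tfrac{n}{\beta(1-\gamma)}$. Since $r(x)^2=x^{T}\hat{P}x\le\|\hat{P}\|_2\|x\|_2^2$ and $\hat{V}(x)=1+\tfrac{1-\gamma}{2}\tfrac{\|x\|_2^2}{n}$, we get $r(x)^2\le\|\hat{P}\|_2\tfrac{2n}{1-\gamma}\hat{V}(x)$; moreover \eqref{eq:t11} with~\cite{hairer2006ergodic} gives $\int\|y\|_2^2\,\nu_{\pi}(dy)\le\tfrac{n+c\varrho^2}{1-\gamma}$, whence $(\mathbb{E}_{\nu_{\pi}}r)^2\le\mathbb{E}_{\nu_{\pi}}r^2\le\|\hat{P}\|_2\tfrac{n+c\varrho^2}{1-\gamma}$ and $\mathbb{E}_{\nu_{\pi}}\hat{V}=O(1)$, so $\overline{r}(x)^2\lesssim\tfrac{n}{1-\gamma}\hat{V}(x)$ with a constant absorbing $\|\hat{P}\|_2,c,\varrho$. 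For $\mathbb{E}_{\hat{\nu}}T$: in the split chain each visit to $\mathcal{S}$ triggers a regeneration independently with probability $\beta$, so $\mathbb{E}_{\hat{\nu}}\sum_{i=0}^{T-1}\chi_{\mathcal{S}}(x_i)=1/\beta$, and then \eqref{eq: invm} with $\mathcal{A}=\mathcal{S}$ gives $\mathbb{E}_{\hat{\nu}}T=1/(\beta\,\nu_{\pi}(\mathcal{S}))$. Feeding the $\hat{V}$–geometric ergodicity of Theorem~\ref{thm:t2} into the standard asymptotic–variance estimate (equivalently, bounding $\mathbb{E}_{\hat{\nu}}[\xi_1^2]$ via Cauchy–Schwarz and the $\hat{V}$–drift, or via the Poisson equation, whose solution is $\hat{V}^{1/2}$–bounded here) then gives $\sigma^2\le\tfrac{C_1}{\beta}\,\mathbb{E}_{\nu_{\pi}}r^2\cdot O(1)\lesssim\tfrac{n}{\beta(1-\gamma)}$, the single factor $1/\beta$ being exactly the minorization/mixing rate. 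Collecting the estimates, $\mathbb{P}(|\tfrac{1}{N}\sum\overline{r}(x_i)|>\epsilon)\le\tfrac{C_2 n}{\beta\epsilon^2(1-\gamma)N}$, and setting the right-hand side $\le\delta$ yields $N\ge\Omega\!\big(\tfrac{n}{\beta\epsilon^2\delta(1-\gamma)}\big)$, which is \eqref{eq:t-d-N}.

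The crux is the variance bound in the last paragraph: one must extract the factor $\tfrac{n}{1-\gamma}$ — supplied cleanly by the quadratic Lyapunov estimate on $\int\|y\|_2^2\,\nu_{\pi}(dy)$ — while keeping the dependence on the (exponentially small in $n$) minorization constant $\beta$ linear; a naive $\mathbb{E}_{\hat{\nu}}[\xi_1^2]\le\mathbb{E}_{\hat{\nu}}[T\sum_{i=0}^{T-1}\overline{r}(x_i)^2]$ followed by crude bounds tends to lose this and introduce spurious extra powers of $\beta^{-1}$ or $(1-\gamma)^{-1}$, so the sharper regenerative (Poisson–equation) estimate is what makes the stated rate go through. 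The secondary technical point, uniform-in-$N$ control of the burn–in block $\mathcal{O}_1$ and the overshoot block $\mathcal{O}_2$ (including the law of $x_N$ for a general starting distribution), is precisely what the stronger conditions \eqref{eq:2t}–\eqref{eq:minorize} of Theorem~\ref{thm:t2}, as opposed to the weaker \eqref{eq:EIcond1}–\eqref{eq:EIcond2}, were set up to guarantee.
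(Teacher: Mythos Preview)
Your proposal is correct and follows essentially the same route as the paper: a Chebyshev/second-moment bound applied to the regenerative decomposition \eqref{eq: overs}, with the dominant term controlled by $\sigma_{as}^{2}(P,r)\lesssim \tfrac{n}{\beta(1-\gamma)}$ via the pointwise estimate $\|\overline{r}\|_{\hat{V}^{1/2}}^{2}\le \tfrac{2n}{1-\gamma}$ and the drift/minorization of Theorem~\ref{thm:t2}. The only cosmetic difference is that the paper outsources the MSE decomposition and the bounds on $\sigma_{as}^{2},C_0,C_1$ to Theorems~3.1, 4.2 and Proposition~4.5 of~\cite{latuszynski2013nonasymptotic} (and starts from a deterministic $x$, giving the extra $\gamma\|x\|_2^2$ term in \eqref{eq:smpl}), whereas you sketch those estimates from scratch starting at $x_0\sim\hat{\nu}$; the ingredients and the resulting rate are the same.
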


Before moving to the proof,
which is based on the approach from~\cite{latuszynski2013nonasymptotic},\footnote{As we rely on the approach and proof from~\cite{latuszynski2013nonasymptotic}, and apply it to the SLDS from \eqref{eq:slcs}, we employ the same notation as~\cite{latuszynski2013nonasymptotic}.} 
we define as well as upper bound  
$\pi(\hat{V}):= \int \hat{V}(y) \nu_{\pi} (dy) =1 + \frac{1-\gamma}{2n}\int V(y) \nu_{\pi} (dy)  \leq 1 +  \frac{(1-\gamma)}{2n} \frac{(n+c\varrho^{2})}{(1-\gamma)} = \frac{3}{2} +\frac{c\varrho^{2}}{2n}$,  $\mathcal{E}_{x}(\hat{V}) := \sup_{N \in \mathbb{N}} \mathcal{P}^{N} \hat{V}(x)= 1+ (\frac{1-\gamma}{2n}) \sup_{N \in \mathbb{N}} \mathcal{P}^{N}V(x) \leq \pi(\hat{V}) +\frac{(1-\gamma)}{2n} \gamma \|x\|_{2}^{2}$. W.l.o.g assume  $\|\hat{P}\|_{2} \leq 1$, then $\|\overline{r}\|_{\hat{V}^{\frac{1}{2}}}^2 := \sup_{x \in \mathbb{R}^{n}}  \left(\frac{|\overline{r}(x)|}{\hat{V}^{\frac{1}{2}}(x)}\right)^{2}  \leq       \sup_{x \in \mathbb{R}^{n}} \frac{x^{T}\hat{P}x+\pi(\hat{V})}{1 + \frac{(1-\gamma)}{2n}\|x\|_{2}^{2}} \leq \frac{2n}{(1-\gamma)}$. With these upper bounds and values $\beta, K_2, \lambda $  we can also compute upper bounds on the following variables (which allows us to capture major bound terms) $\sigma_{as}^{2}(P,r):= \frac{\mathbb{E}_{\hat{\nu}} \left( \sum_{i=0}^{T-1} \overline{r}(x_i)\right)^2}{ \mathbb{E}_{\hat{\nu}} T}$, $C_{0}(P):= \mathbb{E}_{\nu_{\pi}}T- \frac{1}{2}$, $C_1(P,r):= \sqrt{\mathbb{E}_{x} \left(  \sum_{i=0}^{T-1} |\overline{r}|(x_i)  \right)^{2}}$ and $C_2(P,r):= \sqrt{\mathbb{E}_{x} \left( \chi_{(T<n)} \sum_{i=n}^{\tau_{R(n)}-1}  |\overline{r}|(x_i)\right)^{2}}$ -- this directly follows from Theorem 4.2 and Proposition 4.5 from ~\cite{latuszynski2013nonasymptotic}, which are satisfied if the Markov chain satisfies \eqref{eq:2t} and \eqref{eq:minorize}. Details of the proof is given in Appendix.

\subsection{Discussion: I.I.D Block Sequences and Their Link to Sample Complexity.} 

Although Theorem~\ref{thm:thm3}, gives explicit bounds on sample complexity by computing upper bounds on specific coefficients related to SLDS governed by \eqref{eq:slcs}, using Theorem 4.2 and Proposition 4.5 from ~\cite{latuszynski2013nonasymptotic} and then sample complexity follows from Theorem 3.1 of ~\cite{latuszynski2013nonasymptotic}. However, to give a clear explanation to the reader of the how does the existence of i.i.d blocks for Markov chain from SLDS in \eqref{eq:slcs} leads to the sample complexity shown in Theorem \ref{thm:thm3}, recall:
\begin{equation}
\label{eq:block}
\mathbb{P}_{\psi}(|\frac{1}{N}  \sum_{i=0}^{N-1} r(x_i)  - \int_{\mathbb{R}^{n}}r(y) \nu_{\pi} (dy)| > \epsilon ) \leq \frac{ \mathbb{E}_{\psi} (\mathcal{Z}^{2}+ (\mathcal{O}_1 -\mathcal{O}_2 )^2 +2 \mathcal{Z}(\mathcal{O}_1 -\mathcal{O}_2)  }{N^2\epsilon ^{2} }
\end{equation}
As we already have a lower bound on the sample complexity in Theorem \ref{thm:thm3}, it is sufficient to consider bounding $\frac{1}{N^2 \epsilon^2}\mathbb{E}_{\psi} \mathcal{Z}^2 $ from \eqref{eq:block}, with $x_0 \thicksim \psi$ for any arbitrary distribution $\psi$. Recall that $\mathcal{Z}$ corresponds to i.i.d block sequences. To proceed, we first bound the term $\mathbb{E}_{\hat{\nu}} \left(\sum_{i=0}^{\tau_{R(N)}-1} \overline{r}(x_i)\right)^{2}$ as 
\begin{flalign}
 & \label{eq:asyvar} \mathbb{E}_{\hat{\nu}} \left(\sum_{i=0}^{\tau_{R(N)}-1} \overline{r}(x_i)\right)^{2} \stackrel{(blocks)}{=} \mathbb{E}_{\hat{\nu}} \left(\sum_{m=1}^{R(N)} \overline{r}(x_{B_m})\right)^{2} \stackrel{(**)}{=} \mathbb{E}_{\hat{\nu}} \left(\sum_{i=0}^{T-1} \overline{r}(x_i)\right)^{2} \mathbb{E}_{\hat{\nu}}R(N) = \\  & \nonumber = \underbrace{\frac{\mathbb{E}_{\hat{\nu}} \left(\sum_{i=0}^{T-1} \overline{r}(x_i)\right)^{2}}{\mathbb{E}_{\hat{\nu}} T}}_{\sigma_{as}^{2}(P,r)} \mathbb{E}_{\hat{\nu}} T \mathbb{E}_{\hat{\nu}}R(N) = \sigma_{as}^{2}(P,r) \mathbb{E}_{\hat{\nu}} \tau_{R(N)}= \sigma_{as}^{2}(P,r)\cdot \underbrace{(N+ \mathbb{E}_{\hat{\nu}} \Delta(N))}_{\stackrel{(***)}{\leq} N+ C^1(\lambda) \pi(\hat{V})+ \frac{1}{\beta}C^2(\lambda,K_2)},
\end{flalign}
where (blocks) converts summation over trajectory into summation over the $R(N)$ i.i.d blocks, and $(**)$ follows by applying Wald's Lemma on the i.i.d blocks. In addition, $(***)$ holds for sufficiently large $\beta$ from Theorem~4.2 in~\cite{latuszynski2013nonasymptotic}, and $C^{1}(\cdot)$ and $C^{2}(\cdot)$ are constant functions of their respective~arguments.

Now, for $\sigma_{as}^{2}(P,r)$ defined as in~\eqref{eq:asyvar}, it holds that
\begin{equation*}
\begin{split} 
\label{eq:asyvar2}   \sigma_{as}^{2}(P,r):&= \frac{\mathbb{E}_{\hat{\nu}} \left(\sum_{i=0}^{T-1} \overline{r}(x_i)\right)^{2}}{\mathbb{E}_{\hat{\nu}} T} = \frac{\mathbb{E}_{\hat{\nu}} \left(\sum_{i=0}^{T-1}\overline{r}^{2}(x_i)\right) }{\mathbb{E}_{\hat{\nu}} T} + \frac{\mathbb{E}_{\hat{\nu}} \left(\sum_{i=0}^{T-1} \overline{r}(x_i) ( \sum_{j \neq i}^{T-1} \overline{r}(x_j) ) \right) }{\mathbb{E}_{\hat{\nu}} T}= \\ & 
\stackrel{(i)}{=}\mathbb{E}_{\nu_{\pi}} \overline{r}^{2} + 2\mathbb{E}_{\nu_{\pi}}\left( \sum_{i=1}^{T-1} \overline{r} P^{i}\overline{r} \right)   \stackrel{(ii)}{\leq} \frac{2n}{1-\gamma} \underbrace{\left(  \mathbb{E}_{\nu_{\pi}}\hat{V} +
2\mathbb{E}_{\nu_{\pi}}\left( \sum_{i=1}^{T-1} \hat{V}^{\frac{1}{2}} P^{i}\hat{V}^{\frac{1}{2}} \right) \right)}_{ \stackrel{(iii)}{\leq} \frac{1}{\beta}C^3(K_2,\lambda) \pi(\hat{V})}.
\end{split}
\end{equation*}
Here, $(i)$ follows from the structure of the invariant measure~\eqref{eq: invm} and stationarity, whereas $(ii)$ holds because $\| \overline{r} \|_{\hat{V}^\frac{1}{2}}^2 \leq \frac{2n}{1-\gamma}$. 
In addition, $(iii)$ holds from Theorem~4.2 in~\cite{latuszynski2013nonasymptotic}, and $C^{3}(\cdot)$ is a constant function of its argument. From Theorem \ref{thm:thm3}, we have that $\pi(\hat{V}) \leq \frac{3}{2} (1+ \frac{c\varrho^2}{n}) \leq \frac{3}{2} (1+ c\varrho^2)$, as $n \geq 1$. Therefore, $\mathbb{E}_{\hat{\nu}} \left(\sum_{i=0}^{\tau_{R(N)}-1} \overline{r}(x_i)\right)^{2} \leq \frac{nN}{(1-\gamma)\beta} C^{4}(K_2,\lambda,c,\varrho^2)$, where $C^{4}(K_2,\lambda,c,\varrho^2)$ is a constant function depending on $K_2, \lambda,c$ and $\varrho^2$.
With this inequality at hand, if $x_0 \thicksim \psi$ then
as captured in~\cite{latuszynski2013nonasymptotic} (discussion between (3.10) and (3.11) in~\cite{latuszynski2013nonasymptotic})
it holds that
\begin{flalign}
& \nonumber \frac{1}{N^2}\mathbb{E}_{\psi} \mathcal{Z}^2=
\frac{1}{N^2} \mathbb{E}_{\psi} \left(\sum_{i=\tau}^{\tau_{R(N)}-1} \overline{r}(x_i)\right)^2 = \frac{1}{N^2} \sum_{j=1}^{N} \mathbb{E}_{\psi}\bigg( \left(\sum_{i=\tau}^{\tau_{R(N)}-1}  \overline{r}(x_i)\right)^2| T=j\bigg) \mathbb{P}_{\psi}(T=j) \\ & \label{eq:final} = \frac{1}{N^2} \sum_{j=1}^{N} \mathbb{E}_{\hat{\nu}}  \left(\sum_{i=0}^{\tau_{R(N-j)}-1}  \overline{r}(x_i)\right)^2 \mathbb{P}_{\psi}(T=j) \leq \frac{n}{N(1-\gamma)\beta} C^{4}(K_2,\lambda,c,\varrho^2).
\end{flalign}
Now, it directly follows that the sample complexity is $\Omega(\frac{n}{\beta \epsilon^2 \delta (1-\gamma)})$.
\section{Case Studies}
\label{sec:case}
One of our main results is proving linear dependence of the sample complexity on the state space dimensions (with all other variables fixed). 
To validate this phenomenon empirically, we generate a sequence of closed-loop matrices $(\hat{A}_{1}^{\pi} (n),\hat{A}_{2}^{\pi} (n))_{n \in \mathbb{N}}$ such that $\hat{A}_{1}^{\pi} (n) =\gamma I_n$, and $\hat{A}_{2}^{\pi} (n)=c I_n$, where we assign $\gamma=0.9$ and $c=2$,
as well as $\varrho:=10$ (as defined in Theorem~\ref{thm:t1}). 
Specifically, we consider the SLDSs \eqref{eq:slcs}, with size of the state space $n$ varying from $n=1:2000$, with increments of 50, and $M=2$ state regions, resulting in 
\begin{align*}
x_{t+1} (n)&=\hat{A}_{1}^{\pi} (n)x_{t}(n) +w_{t}^1(n), \qquad \text{if } \|x_{t}(n)\|>\varrho \\ x_{t+1}(n)&=\hat{A}_{2}^{\pi} (n) x_{t}(n)+w_{t}^2(n), \qquad \text{ if }  \|x_{t}(n)\|\leq \varrho,
\end{align*}
where $w_{t}^1(n)$ and $w_{t}^2(n)$ are appropriate $n$ dimensional Gaussians as discussed in Section~\ref{sec:mixing}.
With an accuracy parameter $\epsilon:=1e-10$ (from~\eqref{eq:t-d-N}) we define our pseudo sample complexity for state space of size $n$  as the smallest $N(n) \in \mathbb{N}$ such that \begin{equation}
\left|\frac{1}{N(n)} \sum_{t=0}^{N(n)}r(x_{t}(n))-\frac{1}{N(n)+1} \sum_{t=0}^{N(n)+1}r(x_{t}(n))\right| < \epsilon.
\end{equation}

We simulated 100000 independent trials for every considered size of the state space varying from $n=1:2000$ and averaged the pseudo sample complexity for a more accurate description i.e, $N^{a}(n) := \frac{\sum_{i=1}^{100000} N_{i}(n) }{100000}$, where `$i$' represents each independent trial. As shown in Figure~\ref{fig:2figsA}, in higher dimensions sample complexity depends linearly on dimensions of the state space. 
Another important factor is the dependence on `$\gamma$'. We repeated the aforementioned procedure with $10000$ independent trials and same system dynamics, but with `$\gamma$'  varied from $0.5$ to $0.9$ with increments of $0.05$; the obtained results are shown in Figure~\ref{fig:2figsB}. Our results from Figure~\ref{fig:2figsB} validate that the sample complexity degrades with an increase in $\gamma$ as captured in Theorem~\ref{thm:thm3}.

\begin{figure}[!t]
\centering
\parbox{6.6cm}{
\includegraphics[width=6.6cm]{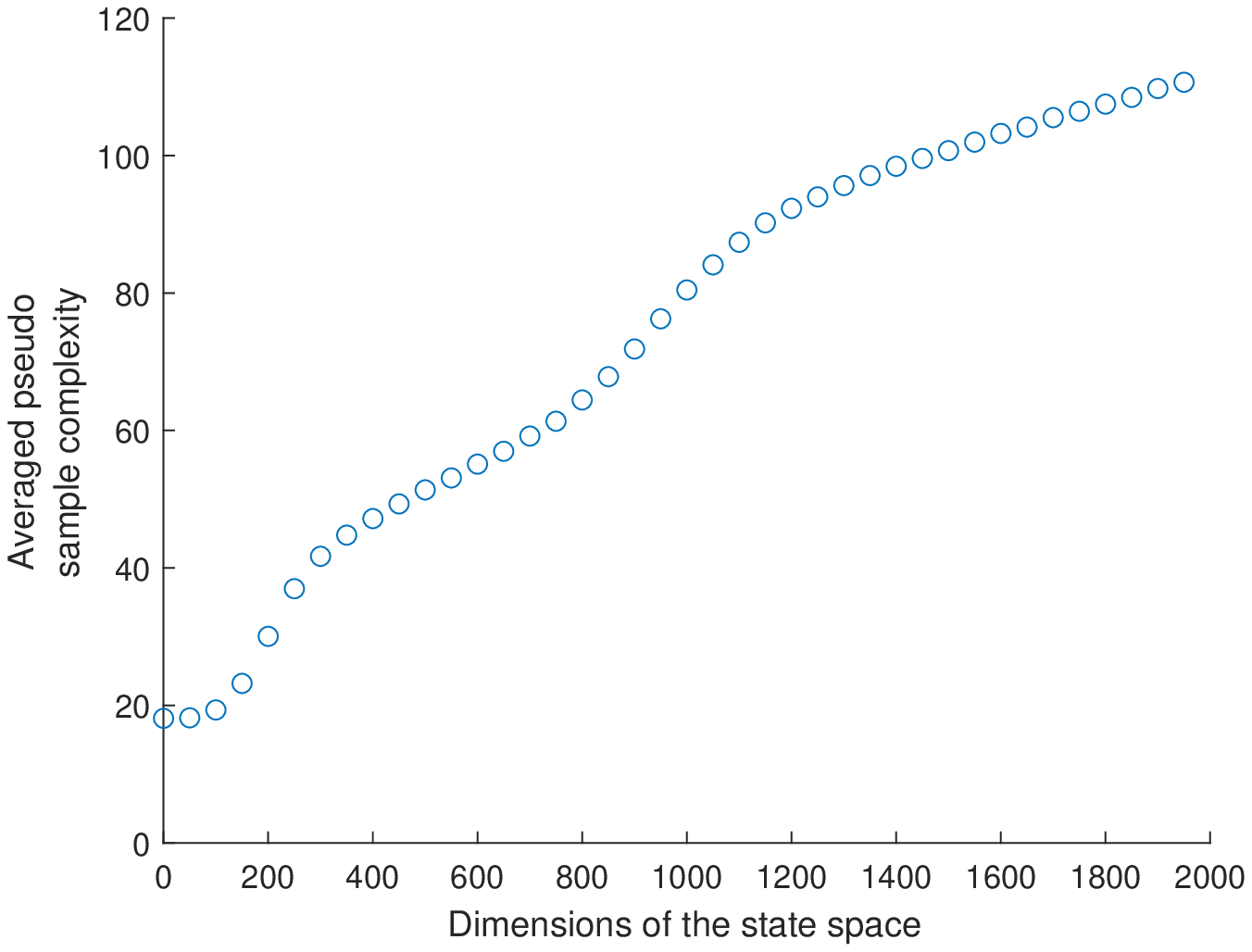}
\vspace{-12pt}
\caption{Average pseudo-sample complexity v.s. the state-space dimension.}
\label{fig:2figsA}}
\hspace{6pt}
\begin{minipage}{6.8cm}
\includegraphics[width=6.8cm,height=4.0cm]{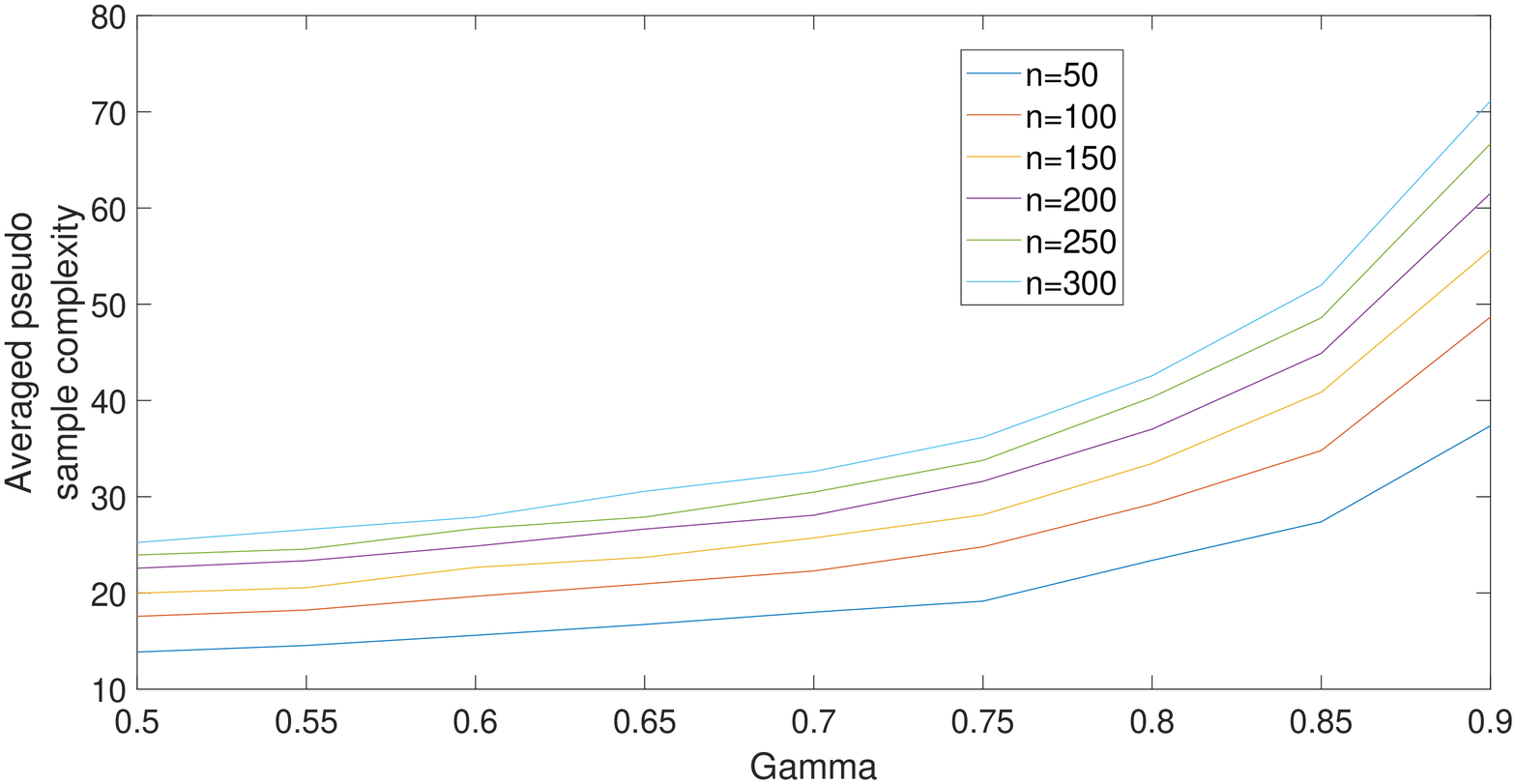}
\caption{Average pseudo-sample complexity v.s. `$\gamma$, for varying state-space dimension}
\label{fig:2figsB}
\end{minipage}
\end{figure}

\section{Conclusion}
\label{sec:conclusion}

We showed existence of invariant ergodic measure for closed-loop switched linear dynamical systems, which are stable in an unbounded subset of the state-space. In addition, we derived non-asymptotic bounds for learning the expected reward from time-averages. 
With all other parameters fixed, we showed that the  sample complexity of learning the expected reward (w.r.t the ergodic invariant measure the closed-loop switched linear dynamical systems mixes to) is linear to the state-space size and inverse quadratic in the approximation error $\epsilon$ (i.e.,~$\Omega(\frac{n}{\epsilon^2})$); hence, extending existing  non-asymptotic results to a class of nonlinear dynamical systems. By learning the expected reward instead of a value function parameterized by a discount factor, we provided a non-asymptotic analysis that is  valid for applications that require minimizing asymptotic~rewards.

%
\section{Acknowledgement}
This work is sponsored in part by the AFOSR under award number FA9550-19-1-0169, as well as the NSF CNS-1652544 and CNS-1837499 awards.
\bibliography{neurips_2020}
\bibliographystyle{dinat}
\newpage
\section*{Appendix}
\label{sec:appendix}

\begin{claim}
\label{cl:cl3}
For every $(x,y)\in \mathcal{D}$, where $\mathcal{D}:=\{ (x,y) \in \mathbb{R}^n \times \mathbb{R}^n : V(x)+V(y) \leq \hat{r}  \}$ and $\hat{r}$ is defined in  ~\eqref{eq:r},
it holds that an $\alpha_{(x,y)}:=\int (f_{x}(z) \land g_{y}(z))dz  >0$.
\end{claim}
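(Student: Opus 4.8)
The plan is to show that the two Gaussian densities $f_x = \mathcal{N}(\hat A_j^\pi x, I_n)$ and $g_y = \mathcal{N}(\hat A_k^\pi y, I_n)$ have a strictly positive overlap integral for \emph{every} fixed pair $(x,y)\in\mathcal{D}$; in fact this is true for every pair in $\mathbb{R}^n\times\mathbb{R}^n$, and the set $\mathcal{D}$ plays no essential role here --- it only matters later, for the uniform bound in the proof of Theorem~\ref{thm:t1}. The point is that both densities are strictly positive everywhere on $\mathbb{R}^n$, so their pointwise minimum is also strictly positive everywhere, and hence has strictly positive Lebesgue integral.

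Concretely, here are the steps I would carry out. First, from \eqref{eq:slcs} identify the two relevant means: since $x\in\mathcal{M}_j$ for some (unique) $j$ and $y\in\mathcal{M}_k$ for some (unique) $k$, we have $f_x(z)=(2\pi)^{-n/2}\exp(-\tfrac12\|z-\hat A_j^\pi x\|_2^2)$ and $g_y(z)=(2\pi)^{-n/2}\exp(-\tfrac12\|z-\hat A_k^\pi y\|_2^2)$. Second, observe that for \emph{every} $z\in\mathbb{R}^n$ both exponentials are strictly positive (the exponent is a finite real number), so $f_x(z)\wedge g_y(z) = (2\pi)^{-n/2}\exp\!\big(-\tfrac12\max(\|z-\hat A_j^\pi x\|_2^2,\|z-\hat A_k^\pi y\|_2^2)\big) > 0$ for every $z$. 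Third, conclude: a measurable function that is strictly positive everywhere on $\mathbb{R}^n$ cannot have zero integral against Lebesgue measure --- indeed if $\int_{\mathbb{R}^n}(f_x\wedge g_y)\,d\lambda^n = 0$ then $f_x\wedge g_y = 0$ $\lambda^n$-a.e., contradicting strict positivity everywhere (since $\lambda^n$ of a nonempty open set, or of all of $\mathbb{R}^n$, is positive). Hence $\alpha_{(x,y)}>0$.

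If one wants a quantitative statement (useful for the downstream uniform infimum argument), I would additionally produce an explicit lower bound: restricting the integral to a ball $\mathcal{B}_\varrho^n$ centered at the origin, and using that on $\mathcal{D}$ the norms $\|\hat A_j^\pi x\|_2$ and $\|\hat A_k^\pi y\|_2$ are bounded by a constant $\mu$ depending only on $\hat r$ and $\max_j\|\hat A_j^\pi\|_2$ (which follows since $V(x)+V(y)\le\hat r$ forces $\|x\|_2,\|y\|_2\le\sqrt{\hat r}$), one gets, for $z\in\mathcal{B}_\varrho^n$, $\max(\|z-\hat A_j^\pi x\|_2,\|z-\hat A_k^\pi y\|_2)\le \varrho+\mu$, whence $f_x(z)\wedge g_y(z)\ge (2\pi)^{-n/2}\exp(-\tfrac12(\varrho+\mu)^2)$ and therefore $\alpha_{(x,y)}\ge (2\pi)^{-n/2}\exp(-\tfrac12(\varrho+\mu)^2)\,\lambda^n(\mathcal{B}_\varrho^n)>0$ uniformly over $\mathcal{D}$.

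The argument is essentially immediate and I do not expect a real obstacle; the only thing to be careful about is the logical equivalence "$\int(f_x\wedge g_y)=0 \iff f_x\wedge g_y = 0$ a.e.", which is the standard fact for nonnegative integrands that the excerpt already invokes via~\cite{folland2013real}, together with the observation that a continuous function strictly positive at every point of $\mathbb{R}^n$ cannot vanish on a set of full Lebesgue measure. A minor point worth stating explicitly is that $j$ and $k$ are well-defined because the regions $\mathcal{M}_1,\dots,\mathcal{M}_M$ partition $\mathbb{R}^n$; and even if one did not want to commit to a single region, the bound $\|\hat A_j^\pi x\|_2 \le (\max_\ell \|\hat A_\ell^\pi\|_2)\|x\|_2$ holds regardless of which index is active, so the estimate above goes through verbatim.
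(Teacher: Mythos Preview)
Your argument is correct and is considerably shorter than the paper's. The paper splits into three cases according to whether $\|x\|_2\le\varrho$ or $>\varrho$ and likewise for $y$, and in each case manipulates the exponent using the bounds $\|\hat A_j^\pi\|_2^2\le c$ or $\le\gamma$ together with $\|x\|_2^2,\|y\|_2^2\le\hat r$ to exhibit an explicit strictly positive lower bound for $f_x(z)\wedge g_y(z)$; from this it concludes positivity a.e.\ and hence $\alpha_{(x,y)}>0$. You instead observe in one line that any two nondegenerate Gaussian densities are strictly positive at every $z\in\mathbb{R}^n$, so their pointwise minimum is strictly positive everywhere and its integral is positive --- a fact that does not even use the constraint $(x,y)\in\mathcal{D}$. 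Both routes establish the claim; yours is more elementary and more general. Your optional quantitative add-on (restricting to a ball and bounding the means via $\|x\|_2,\|y\|_2\le\sqrt{\hat r}$ and $\max_\ell\|\hat A_\ell^\pi\|_2$) in fact delivers a \emph{uniform} lower bound over $\mathcal{D}$ directly, which is what Theorem~\ref{thm:t1} actually needs downstream; the paper instead obtains only pointwise positivity from Claim~\ref{cl:cl3} and then invokes compactness of $\mathcal{D}$ separately to pass to the infimum.
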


\begin{proof}
As discussed in Section  ~\ref{sec:mixing}
,it suffices to prove that  $ f_{x}(z) \land g_{y}(z) >0 $ a.e. w.r.t Lebesgue measure on $\mathbb{R}^{n}$. 

We identify the following three cases and prove the claim for each case. 

\vspace{10pt}\textbf{Case 1.} $\|x\|_{2} \leq \varrho$, $\|y\|_{2} > \varrho$ 
and without loss of generality (w.l.o.g.) assume that $P(x,\cdot) \thicksim \mathcal{N}(\hat{A}_j^\pi x, {I}_n)$ and $P(y,\cdot) \thicksim \mathcal{N}(\hat{A}_k^\pi x, {I}_n)$ where $\|\hat{A}_j^\pi\|_{2}^{2} \leq c$ and $\|\hat{A}_k^\pi\|_{2}^{2} \leq \gamma <1$. Then it holds that 
    \begin{flalign}
    \label{eq:C1}
    \nonumber  f_{x}(z)\land g_{y}(z) &=\\ \nonumber 
    =\frac{1}{(2 \pi)^{\frac{n}{2}}}&\exp( - \frac{\|z\|_2 ^{2}}{2})\Bigg( \exp(-\frac{\|\hat{A}_j^\pi x\|_2^{2}}{2})\exp(-z^{T}\hat{A}_j^\pi x) \land  \exp{(-\frac{\|\hat{A}_k^\pi y\|_2^{2}}{2})}\exp(-z^{T}\hat{A}_k^\pi y)\Bigg)  \\\nonumber
    >
    \frac{1}{(2 \pi)^{\frac{n}{2}}}&\exp( - \frac{\|z\|_2 ^{2}}{2}) \Bigg(\exp(-\frac{\|\hat{A}_j^\pi x\|_2^{2}}{2})\exp(-z^{T}\hat{A}_j^\pi x) \land    \exp(-\frac{\| y\|_2^{2}}{2})\exp(-z^{T}\hat{A}_k^\pi y)\Bigg)  \\ \nonumber
    \geq \frac{1}{(2 \pi)^{\frac{n}{2}}}&\exp( - \frac{\|z\|_2 ^{2}}{2}) \Bigg( \exp(-\frac{\|\hat{A}_j^\pi\|_2 ^{2}\| x\|_2^{2}}{2})\exp(-z^{T}\hat{A}_j^\pi x) \land   \exp(-\frac{\| y\|_2^{2}}{2})\exp(-z^{T}\hat{A}_k^\pi y) \Bigg)  \\ \nonumber
    = \frac{1}{(2 \pi)^{\frac{n}{2}}}&\exp(-\frac{\|z\|_2 ^{2}}{2}) \exp(-\frac{\|\hat{A}_j^\pi\|_2 ^{2}\| x\|_2^{2}}{2}) \Bigg(\exp(-z^{T}\hat{A}_j^\pi x) \land  \\ & \nonumber \exp(\frac{\|\hat{A}_j^\pi\|_2 ^{2}\| x\|_2^{2}}{2}) \exp(-\frac{\| y\|_2^{2}}{2})\exp(-z^{T}\hat{A}_k^\pi y)\Bigg)  \\ \nonumber \geq \frac{1}{(2 \pi)^{\frac{n}{2}}}&\exp( - \frac{\|z\|_2 ^{2}}{2}) \exp(-\frac{\|\hat{A}_j^\pi\|_2 ^{2}\| x\|_2^{2}}{2}) \Bigg( \exp(-z^{T}\hat{A}_j^\pi x) \land  \\ &  \exp(\frac{\|\hat{A}_j^\pi\|_2 ^{2}\| x\|_2^{2}}{2}) \exp(-\frac{(n+ c \varrho^{2})}{ \gamma (1-\gamma )}  )\exp(-z^{T}\hat{A}_k^\pi y) \Bigg), 
    \end{flalign}
where the last inequality follows from the fact that $ \frac{2(n+ c \varrho^{2})}{ \gamma (1-\gamma )} \geq \|y\|_2^{2} > \varrho^{2} $. Since $(1-\gamma)>0$, we get that the right side of \eqref{eq:C1} is $ > 0$ a.e w.r.t Lebesgue measure on $\mathbb{R}^n$.
%

\vspace{10pt}\textbf{Case 2.} $\|x\|_{2} > \varrho$, $\|y\|_{2} > \varrho$ and
    w.l.o.g let us assume that $P(x,\cdot) \thicksim \mathcal{N}(\hat{A}_l^\pi x, {I})$ and $P(y,\cdot) \thicksim \mathcal{N}(\hat{A}_m^\pi x, {I})$, where  $\|\hat{A}_l^\pi\|_{2}^{2} \leq \gamma$ and
    $\|\hat{A}_m^\pi\|_{2}^{2} \leq \gamma$. Then, the following holds:
    \begin{flalign}
    \label{eq:C2}
    \nonumber f_{x}(z)\land g_{y}(z) & = \\ \nonumber
    =\frac{1}{(2 \pi)^ \frac{n}{2}} &\exp(-\frac{\|z\|_{2}^2}{2})\Bigg( \exp(- \frac{\|\hat{A}_l^\pi x\|_{2}^{2}}{2}) \exp(-z^{T} \hat{A}_l^\pi x ) \land \exp(- \frac{\|\hat{A}_m^\pi y\|_{2}^{2}}{2}) \exp(-z^{T} \hat{A}_m^\pi y )\Bigg)  \\ \nonumber 
    \geq \frac{1}{(2 \pi)^ \frac{n}{2}} &\exp(-\frac{\|z\|_{2}^2}{2})\Bigg( \exp(- \gamma \frac{\|x\|_{2}^{2}}{2}) \exp(-z^{T} \hat{A}_l^\pi x ) \land \exp(- \gamma \frac{\|y\|_{2}^{2}}{2}) \exp(-z^{T} \hat{A}_m^\pi y )\Bigg)  \\ 
    \geq \frac{1}{(2 \pi)^{\frac{n}{2}}}& \exp(-\frac{\|z\|_{2}^{2}}{2})\exp(- \frac{n+c\varrho^{2}}{(1-\gamma)}) \Bigg( \exp(-z^{T}\hat{A}_l^\pi x) \land \exp(-z^{T}\hat{A}_m^\pi x) \Bigg).  
    \end{flalign}
    Here, the last inequality follows from the fact that $ \frac{2(n+ c \varrho^{2})}{ \gamma (1-\gamma )} \geq \|y\|_2^{2} > \varrho^{2} $ and  $ \frac{2(n+ c \varrho^{2})}{ \gamma (1-\gamma )} \geq  \|x\|_{2}^{2} > \varrho^{2}$.
    Since, $(1-\gamma)>0$, it holds that the right side of \eqref{eq:C2} is $>0$ a.e w.r.t Lebesgue measure on $\mathbb{R}^n$.
    %

\textbf{Case 3.} $\|x\|_{2} \leq \varrho$, 
$\|y\|_{2} \leq \varrho$ and w.l.o.g we assume that $P(x,\cdot) \thicksim \mathcal{N}(\hat{A}_u^\pi x, {I})$ and $P(y,\cdot) \thicksim \mathcal{N}(\hat{A}_o^\pi x, {I})$, where  $\|\hat{A}_u^\pi\|_{2}^{2} \leq C$ and
    $\|\hat{A}_o^\pi\|_{2}^{2} \leq C$. Then, it holds that 
    \begin{flalign}
    \nonumber f_{x}(z)\land g_{y}(z) &= \\ \nonumber 
    = \frac{1}{(2 \pi)^{\frac{n}{2}}} &\exp(- \frac{\|z\|_2 ^{2}}{2}) \Bigg ( \exp(-\frac{\|\hat{A}_u^\pi x\|_{2}^{2}}{2})\exp(-z^{T}\hat{A}_u^\pi x) \land  \exp(-\frac{\|\hat{A}_o^\pi x\|_{2}^{2}}{2})\exp(-z^{T}\hat{A}_o^\pi x) \Bigg)  \\ \nonumber \geq \frac{1}{(2 \pi)^{\frac{n}{2}}}&\exp(-\frac{\|z\|_{2}^{2}}{2}) \Bigg(\exp(-\frac{C\|x\|_{2}^{2}}{2}) \exp(-z^{T}\hat{A}_u^\pi x) \land  \exp(-\frac{C\|y\|_{2}^{2}}{2}) \exp(-z^{T}\hat{A}_o^\pi) \Bigg) 
    \\ \label{eq:C3} 
    \geq \frac{1}{(2 \pi)^{\frac{n}{2}}} &\exp(- \frac{\|z\|_{2}^{2}}{2}) \exp(- \frac{C\varrho^{2}}{2})\Bigg(\exp(-z^{T}\hat{A}_u^\pi x) \land  \exp(-z^{T}\hat{A}_o^\pi y) \Bigg). 
    \end{flalign}
    %

Hence, the right side of~\eqref{eq:C3} is $>0$ a.e w.r.t Lebesgue measure on $\mathbb{R}^{n}$, which concludes the proof.
\end{proof}

\vspace{12pt}
\subsection*{Proof of Theorem \ref{thm:thm3}}
\begin{proof}
Using Theorem~3.1 in~\cite{latuszynski2013nonasymptotic} we get that
\begin{flalign}
    & \nonumber \mathbb{E}_{x} (\frac{1}{N} \sum_{i=0}^{N-1} (r(x_i) - \int r(y) \nu_{\pi}(dy)))^{2} \\  &  \leq \frac{\sigma_{as}^{2}(P,r)}{N}(1+ \frac{C_{0}^{2}(P)}{N^{2}}+ 2 \frac{C_{0}(P)}{N}) + 4\frac{C_1(P,r)}{N} \frac{\sigma_{as}(P,r)}{\sqrt{N}} (1 + \frac{C_0(P)}{N})+ 4 \frac{C_{1}^{2}(P,r)}{N^{2}}.
\end{flalign}
Now, from Theorem 4.2 and Proposition 4.5 in~\cite{latuszynski2013nonasymptotic}, it follows that
\begin{equation}
\label{eq:p26}
4\frac{C_1(P,r)}{N} \frac{\sigma_{as}(P,r)}{\sqrt{N}} (1 + \frac{C_0(P)}{N})  \leq 4 \frac{c_{10as}}{N\sqrt{N}}(2\pi(\hat{V})+\frac{1-\gamma}{2n}\gamma \|x\|_{2}^{2}) \frac{2n}{1-\gamma}   \leq  4 (\frac{c_{10as}}{N}) (\frac{6n + 2c\varrho^{2} + \gamma \|x\|_{2}^{2}}{1-\gamma}),
\end{equation}
\begin{flalign}
\label{eq:p27}
 4 \frac{C_{1} ^{2}(P,r)}{N^{2}} \leq 4 (\frac{c_1^{2}}{N^{2}}) (\frac{3n+c\varrho^{2}+ \gamma (1-\gamma)\|x\|_{2}^{2}}{1-\gamma}),
\end{flalign}
\begin{flalign}
\label{eq:p28}
 2 \sigma_{as}^{2} (P,r) \frac{C_0 (P)}{N^{2}}  \leq c_{2as0} \frac{\frac{18}{4}n +c^2\varrho^{4} +3c\varrho^{2}}{N^{2}(1-\gamma)}, 
\end{flalign}
\begin{flalign}
\label{eq:p29}
 \frac{\sigma_{as}^{2} (P,r)}{N} \frac{C_{0}^2(P)}{N^{2}} \leq c_{2as20} \frac{ \frac{27}{4}n + \frac{3c^{2}\varrho^{2}}{4} +\frac{27c\varrho^{2}}{4} + \frac{c^3 \varrho^{4} }{4} + \frac{3c^{2}\varrho^{4}}{2} }{(1-\gamma)  N^{3}},
\end{flalign}
\begin{flalign}
& \frac{\sigma_{as}^{2} (P,r)}{N} \leq 
(\frac{1+\sqrt{\lambda}}{1-\sqrt{\lambda}}+ \frac{2(\sqrt{K_2} -\sqrt{\lambda} -\beta)}{\beta(1-\sqrt{\lambda})})\frac{2n}{N(1-\gamma)} \pi(\hat{V}) \label{eq:p30}  \leq \frac{4(1+ \sqrt{\gamma}\sqrt{1-\gamma}\sqrt{2+c\varrho^{2}})}{N\beta(1-\gamma)^{2}}(3n+c\varrho^{2})
\end{flalign}
Therefore, it holds that
\begin{flalign}
& \nonumber \mathbb{P}(|\frac{1}{N}  \sum_{i=0}^{N-1} r(x_i)  - \int_{\mathbb{R}^{n}}r(y) \nu_{\pi} (dy)| > \epsilon ) \leq \delta \implies \mathbb{E}_{x}\frac{(| \frac{1}{N} \sum_{i=0}^{N-1} (r(x_i) - \nu_{\pi}(r)) |^{2}) }{\epsilon ^{2} } \leq \delta \\ & \nonumber \implies \frac{\frac{\sigma_{as}^{2}(P,r)}{N}(1+ \frac{C_{0}^{2}(P)}{N^{2}}+ 2 \frac{C_{0}P}{N}) + 4\frac{C_1(P,r)}{N} \frac{\sigma_{as}(P,r)}{\sqrt{N}} (1 + \frac{C_0(P)}{N})+ 4 \frac{C_{1}^{2}(P,r)}{N^{2}}}{\epsilon ^{2}} \leq \delta \\ &
\label{eq:smpl}
\implies N \geq \frac{o_1(o_2n+o_3+\gamma \|x\|_{2}^{2}) }{(1-\gamma) \delta \beta \epsilon ^{2}}, 
\end{flalign}
where \eqref{eq:smpl} follows from~\eqref{eq:p26}-\eqref{eq:p30}. 
$c_{10as}$, $c_{1}^{2}$, $c_{2as0}$ and $c_{2as20}$ are constants that contain $\lambda$ factor, but since we left $\lambda$ as any arbitrary value between $(\gamma,1)$, we ignore writing down the tedious exact form the aforementioned constants. However, $o_1$, $o_2$ and $o_3$ are fixed constants independent of $n$, $\gamma$ and~$\lambda$.
\end{proof}

\end{document}